\newtheorem*{remark}{Remark}
\newtheorem{theorem}{Theorem}[section]
\newtheorem{lemme}[theorem]{Lemma}
\newtheorem{proposition}[theorem]{Proposition}
\theoremstyle{definition}
\newtheorem{definition}{Definition}[section]
\newcommand{\E}{\mathbb{E}}		
\newcommand{\M}{\mathbb{M}}
\newcommand{\N}{\mathbb{N}}		
\renewcommand{\P}{\mathbb{P}}	
\newcommand{\R}{\mathbb{R}}		
\renewcommand{\S}{\mathbb{S}}	
\newcommand{\cF}{\mathcal F}		
\newcommand{\cG}{\mathcal G}	
\newcommand{\cH}{\mathcal H}	
\newcommand{\cM}{\mathcal M}	
\newcommand{\cN}{\mathcal N}	
\newcommand{\cP}{\mathcal P}		
\newcommand{\cS}{\mathcal S}
\newcommand{\cT}{\mathcal T}		
\newcommand{\cW}{\mathcal W}
\newcommand{\1}{\mathds{1}}			
\title{There are no geodesic hubs in the Brownian sphere}
\author{Mathieu Mourichoux}
\date{}
\begin{document}\maketitle
\begin{abstract}
    A point of a metric space is called a $k$-hub if it is the endpoint of exactly $k$ disjoint geodesics, and that the concatenation of any two of these paths is still a geodesic. We prove that in the Brownian sphere, there is no $k$-hub for $k\geq 3$. 
\end{abstract}

\section{Introduction}
The Brownian sphere $(\cS,D)$ is a model of random geometry, that arises as the scaling limit of several models of random planar maps. In particular, it is the scaling limit of quadrangulations of the sphere with $n$ faces chosen uniformly at random \cite{uniqueness,convergence}. The Brownian sphere also comes with a volume measure $\mu$. In this work, we are interested in the existence of a family of exceptional points in the Brownian sphere, which are called \textbf{geodesic hubs}. 

Recall that a geodesic $(\gamma(t))_{t\in[0,\tau]}$ in a metric space $(E,d)$ is a path $\gamma:[0,\tau]\mapsto E$ such that, for every $s,t\in[0,\tau],\,d(\gamma(s),\gamma(t))=|t-s|$. We say that a point $x\in E$ is a \textbf{geodesic hub} with at least $k$ arms, or a $k^+$-hub, if :
\begin{itemize}[label=\textbullet]
    \item there exists at least $k$ geodesics $\gamma_i:[0,\tau_i]\mapsto E$ such that $\gamma_i(0)=x$
    \item for every 
$1\leq i<j\leq k$, $\gamma_i((0,\tau_i])\cap\gamma_j((0,\tau_j])=\varnothing$ and the path obtained by following $\gamma_i$ from $\gamma_i(\tau_i)$ to $x$ and then $\gamma_j$ from $x$ to $\gamma_j(\tau_j)$ is a geodesic.
\end{itemize} The geodesics $(\gamma_i)_{1\leq i\leq k}$ are called the arms of the $k^+$-hub, and we say that the $k$-uple $(x_i)_{1\leq i\leq k}$ \textbf{borders a $k^+$-hub}. Of course, a $k^+$-hub is bordered by infinitely many points. We say that $x\in E$ is a \textbf{$k$-hub} if it is a $k^+$-hub, but not a $(k+1)^+$-hub. This notion was introduced in \cite{Poissonroads}, in the course of studying some random fractal metric on $\R^2$. 
\begin{figure}
        \centering
        \includegraphics[scale=0.4]{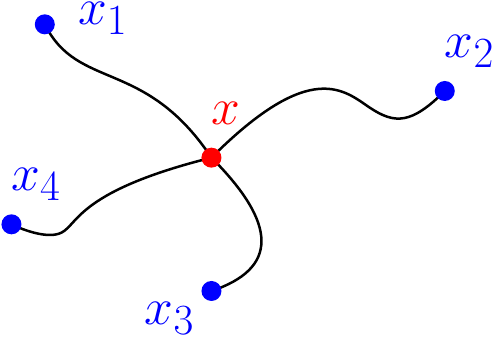}
        \caption{Illustration where $x$ is a $4$-hub, and $(x_1,x_2,x_3,x_4)$ border a $4$-hub. Each of the black paths between $x_i$ and $x_j$ for $i\neq j$ in $\{1,2,3,4\}$ is a geodesic.}
    \end{figure}
Note that there always exists $2$-hubs in a geodesic space, since $2$-hubs are just points in the interior of geodesics. The main contribution of this paper is to prove that $3^+$-hubs do not exist in the Brownian sphere, confirming a prediction of \cite{Poissonroads}.

\begin{theorem}\label{hub}
    Almost surely, there is no $3^+$-hub in the Brownian sphere.
\end{theorem}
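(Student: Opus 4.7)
I would argue by contradiction: assume $x$ is a $3^+$-hub bordered by $(x_1,x_2,x_3)$ with arms $\gamma_1,\gamma_2,\gamma_3$ of positive length. The first step is to reduce to the case where the endpoints are $\mu$-\emph{typical}, meaning they admit a unique geodesic to $\mu$-a.e.\ other point. Since $\mu$-typical points form a set of full $\mu$-measure and are dense in $\cS$, one should be able to perturb or extend the arms so that the new endpoints $y_1,y_2,y_3$ are typical while preserving a hub structure (possibly with a slightly moved center $\tilde x$). This reduction is already nontrivial: the arms are one-dimensional and may a priori avoid the typical locus entirely, so one likely has to extend each $\gamma_i$ slightly past $x_i$ along a geodesic continuation, or argue by a perturbation exploiting the continuity of the metric together with the Gromov-Hausdorff structure.

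Second, for three $\mu$-typical endpoints the unique pairwise geodesic $[y_i,y_j]$ exists a.s., and the hub condition forces it to coincide with the concatenation of $\gamma_i$ reversed (from $y_i$ to $\tilde x$) and $\gamma_j$ (from $\tilde x$ to $y_j$). Applying the confluence property of geodesics emanating from a typical source, $[y_1,y_2]$ and $[y_1,y_3]$ share a nontrivial initial segment from $y_1$, up to a first branching point $b_1$; both begin with $\gamma_1$ reversed, so $b_1$ lies at distance at least $|\gamma_1|$ from $y_1$, and since $\gamma_2,\gamma_3$ are disjoint past $\tilde x$ the branching cannot extend strictly beyond $\tilde x$. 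Hence $b_1=\tilde x$, and symmetrically $b_2=b_3=\tilde x$.

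The crux of the argument is then to show that, for $\mu^{\otimes 3}$-a.e.\ triple $(y_1,y_2,y_3)$, the three branching points $b_1,b_2,b_3$ are a.s.\ distinct, contradicting the previous paragraph. I would attack this via the Brownian snake encoding of $\cS$: each $b_i$ is expressible through the snake labels near the time encoding $y_i$, and some near-independence of these three local structures should force generic distinctness. Concretely, one could aim at a Hausdorff-dimension or first-moment argument bounding the expected $\mu^{\otimes 3}$-measure of triples whose branching points coincide, and deduce almost-sure emptiness of the set of potential hub centers.

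The main obstacle is precisely this final step. Branching points are global functionals of the entire sphere and of the chosen target, so genuine independence is not directly available, and quantifying $\P(b_1=b_2)=0$ for two independent typical points $y_1,y_2$ demands delicate quantitative control of simple geodesics. A plausible alternative route, which might bypass a direct analysis of $b_1,b_2,b_3$, is to invoke Le~Gall's classification of exceptional points (those carrying multiple geodesics to a root) and to argue directly that no point of $\cS$ can carry three pairwise transversal geodesic directions at once, the hub condition being strictly stronger than the mere existence of three geodesics.
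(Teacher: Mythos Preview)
Your reduction to $\mu$-typical endpoints is correct and is exactly the paper's first step (the paper invokes the geodesic-approximation results of Miller--Qian to make this clean: around any hub triple there are open neighborhoods of positive $\mu$-mass whose points still border a hub). Your second step, identifying the common branching point with the hub center via confluence, is also morally right. But your plan stalls precisely where you say it does, and the paper's route diverges sharply from yours at that point.

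The idea you are missing is that one of the three typical points can be taken to be the \emph{distinguished minimum} $x_*$ of the snake construction. By re-rooting invariance, a triple of independent $\mu$-points in the free sphere has the same law (as a pointed space) as $(x_*,x_0,y)$ with $y$ $\mu$-sampled; a further Bismut-type decomposition turns $y$ into the top of a spine. This is decisive because geodesics \emph{to} $x_*$ are explicit: they are the simple geodesics obtained by following decreasing labels. In particular the geodesics from $x_0$ and from $x_1$ to $x_*$ can be written down, and their coalescence point $x_{**}$ is identified concretely as the projection of the second-smallest label minimum among the subtrees on the two sides of the spine. The hub condition becomes the statement that the concatenated path through $x_{**}$ is a geodesic, and the paper localizes this to a single Brownian slice: it holds only if some geodesic in that slice passes through its apex.

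That localized question is then settled by explicit computation. Under $\N_1(\cdot\mid W_*=0)$ the slice has a spine decomposition along a Bessel$(-5)$ process with conditioned Poisson subtrees. The paper shows (i) with positive probability no geodesic passes through the apex, via a Kochen--Stone argument on the events ``labels near the apex stay above half the spine label at scale $2^{-n}$'' (Borel--Cantelli is unavailable because these events are correlated; one computes $\P(E_n\cap E_m)=\P(E_m)\P(E_{n-m})$ by the Markov property and scaling); and (ii) the event ``some geodesic passes through the apex'' is measurable with respect to a trivial germ $\sigma$-algebra at the apex, so its probability is $0$ or $1$, hence $0$.

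Your proposed endgame --- proving the three branching points $b_1,b_2,b_3$ are generically distinct via a dimension or first-moment bound --- has no visible implementation, for the reason you yourself give: branching points are global functionals, and no independence is available. The paper never attempts this; it instead converts the global question into a purely local one about the apex of a slice, where Poisson and Bessel calculus can be done by hand. That conversion, via the choice of $x_*$ as one of the three typical points, is the step your plan is missing.
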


Of course, this result implies that there is no $k$-hub for $k\geq 3$ either. Let us mention that exceptional points in the Brownian sphere, in particular geodesic stars, have already been studied in several works. Recall that a point $x\in E$ is a $k^+$-star if there exists $k$ disjoint geodesics emanating from $x$, and we say that $x$ is a $k$-star if it is a $k^+$-star but not a $(k+1)^+$-star. It was proved in \cite{geodesic2,Geodesicstars} that for $1\leq k\leq 4$, the set of $k$-star in the Brownian sphere has Hausdorff dimension $5-k$, almost surely. However, the existence of $5$-stars remains an open question. Similarly, it is not known if two geodesics can intersect each other at a single point which is in the interior of both geodesics.

Finally, the paper \cite{Poissonroads} studies several exceptional points for a random metric on $\R^2$, constructed from a Poisson process of roads. In particular, in stark contrast with the Brownian sphere, they prove that their model contains $k$-hubs up to $k=4$. 

Let us sketch the proof of Theorem \ref{hub} :

\begin{itemize}[label=\textbullet]
    \item First, we prove that we can restrict ourselves to the study of $3^+$-hubs bordered by points distributed according to the volume measure $\mu$, and construct a variant of the Brownian sphere with three marked points. This essentially relies on results about approximations of geodesics from \cite{geodesic2}, and Bismut decomposition of a random labelled tree under $\N_0$.
    \item Then, we prove that the existence of a $3^+$-hub implies that with a positive probability, a geodesic passes through the apex of a Brownian slice (see section \ref{slice} for a definition of this space). To do so, we rely on the characterization of geodesics towards $x_*$ from \cite{geodesic1}, and the Palm formula. 
    \item Then, we show that with a positive probability, there is no geodesic that passes through the apex of a Brownian slice, and we conclude with a $0-1$ argument. This part of the proof mostly relies on explicit formulas for Poisson point measures.
\end{itemize}

This paper is organized as follows. In Section 2, we introduce the notions of Brownian snake and Brownian spheres that will be used in this article. Then, Section 3 is devoted to the proof of Theorem \ref{hub}. Finally, in Section 4, we use Theorem \ref{hub} to prove Proposition \ref{alignement}, which is of independent interest. 

\subsection*{Acknowledgements}
I am grateful to my supervisor Grégory Miermont for his support, and for his careful reading of this paper. I would also like to thank Lou Le Bihan and Simon Renouf for their help in typing this work.

\section{Preliminaries}

In this section, we introduce the notion of Brownian sphere and Brownian slice. To do so, we first recall some basic notions about snake trajectories.

\subsection{Snake trajectories}\label{intervalle}
Here, we recall the definition and some basic notions about snake trajectories. A finite path is a continuous function $w:[0,\zeta]\longrightarrow \R$, where $\zeta=\zeta(w)\geq 0$ is called the lifetime of $w$, and we set $\widehat{w}=w(\zeta)$. We write $\mathfrak{W}$ for the set of all finite paths in $\R$, and for every $x\in\R$, we let $\mathfrak{W}_x:=\{w\in\mathfrak{W}:w(0)=x\}$. The set $\mathfrak{W}$ is a Polish space when equipped with the distance
\begin{equation*}
    d(w,w')=|\zeta(w)-\zeta(w')|+\sup_{t\geq 0}|w(t\wedge\zeta(w))-w'(t\wedge\zeta(w'))|.
\end{equation*}
Finally, we identify the point $x\in\R$ with the element of $\mathfrak{W}_x$ with zero lifetime. 
\begin{definition}
Fix $x\in\R$. A snake trajectory starting from $x\in\R$ is a continuous mapping $s\mapsto\omega_s$ from $\R_+$ to $\mathfrak{W}_x$ which satisfies the following conditions : 
\begin{itemize}
    \item $\omega_0=x$ and the quantity $\sigma(\omega)=\sup\{s\geq 0:\omega_s\neq x\}$ is finite, 
    \item For every $0\leq s\leq s'$, we have $\omega_s(t)=\omega_{s'}(t)$ for every $t\in[0,\min_{s\leq r\leq s'}\zeta(\omega_r)]$. 
\end{itemize}
\end{definition}
The quantity $\sigma(\omega)$ is called the duration of the snake trajectory $\omega$. We will denote by $\mathfrak{S}_x$ the set of snake trajectories starting from $x\in\R$, and $\mathfrak{S}=\bigcup_{x\in\R}\mathfrak{S}_x$ the set of all snake trajectories. We will use the notation $W_s(\omega)=\omega_s$ and $\zeta_s(\omega)=\zeta(\omega_s)$. Note that a snake trajectory $\omega$ is completely determined by its lifetime function $s\mapsto\zeta_s(\omega)$ and its tip function $s\mapsto\widehat{W}_s(\omega)$ (see \cite{Refserpent} for a proof). We also write $W_*(\omega)=\inf_{t\geq0}\widehat{W}_t(\omega)$.\\
Given a snake trajectory $\omega\in\S$, its lifetime function $\zeta(\omega)$ encodes a compact $\R$-tree, which will be denoted by $\cT_{\omega}$. More precisely, if we introduce a pseudo-distance on $[0,\sigma(\omega)]$ by letting 
\begin{equation*}
    d_{(\omega)}(s,s')=\zeta_s(\omega)+\zeta_{s'}(\omega)-2\min_{s\wedge s'\leq r\leq s\vee s'}\zeta_r(\omega),
\end{equation*}
then $\cT_{\omega}$ is the quotient space $[0,\sigma(\omega)]/\{d_{(\omega)}=0\}$ equipped with the distance induced by $d_{(\omega)}$. We write $p_\cT:[0,\sigma(\omega)]\rightarrow \cT_\omega$ for the canonical projection, and root the tree $\cT_{\omega}$ at $\rho_\cT:=p_\cT(0)=p_\cT(\sigma(\omega))$. The tree $\cT_{\omega}$ also comes with a volume measure, which is the pushforward of the Lebesgue measure on $[0,\sigma(\omega)]$ by the projection $p_\cT$. Finally, note that, because of the snake property, $W_s(\omega)=W_{s'}(\omega)$ if $p_\cT(s)=p_\cT(s')$. In particular, the mapping $s\rightarrow\widehat{W}_s(\omega)$ can be viewed as a function on the tree $\cT_\omega$. In this article, for $u\in\cT_\omega$ and $s\in [0,\sigma]$ such that $p_\cT(s)=u$, we will often use the notation $\ell_u=\widehat{W}_s(\omega)$. \\
We also define intervals on the tree $\cT_\omega$ as follows.  For every $s,t\in[0,\sigma]$ with $t<s$, we use the convention that $[s,t]=[s,\sigma]\cup[0,t]$. For every $u,v\in\cT_\omega$, there is a smallest interval $[s,t]$ such that $p_\cT(s)=u$ and $p_\cT(t)=v$, and we define 
\[[u,v]:=\{p_\cT(r):r\in[s,t]\}.\]

\subsection{The Brownian snake excursion measure}\label{snake}

In this subsection, we give the construction and some properties of the Brownian snake (see \cite{serpent} for more details). For every $x\in\R$, we define a $\sigma$-finite measure on $\mathfrak{S}_x$, called the Brownian snake excursion measure and denoted as $\N_x$, as follows. Under $\N_x$ :
\begin{enumerate}
    \item The lifetime function $(\zeta_s)_{s\geq 0}$ is distributed according to the Itô measure of positive excursions of linear Brownian motion, normalized so that the density of $\sigma$ under $\N_x$ is $t\mapsto(2\sqrt{2\pi t^3})^{-1}$.
    \item Conditionally on $(\zeta_s)_{s\geq 0}$, the tip function $(\widehat{W}_s)_{s\geq0}$ is a Gaussian process with mean $x$ and covariance function : 
    \begin{equation*}
        K(s,s')=\min_{s\wedge s'\leq r\leq s\vee s'}\zeta_r.
    \end{equation*}
\end{enumerate}
The measure $\N_x$ is also an excursion measure away from $x$ for the Brownian snake, which is a Markov process in $\mathfrak{W}_x$. For every $t>0$, we can define the conditional probability measure $\N_x^{(t)}=\N_x(\cdot\,|\,\sigma=t)$, which can also be constructed by replacing the Itô measure used to define $\N_x$ by the law of a Brownian excursion with duration $t$.

For every $y<x$, we have
\begin{equation}\label{inf}
    \N_x(W_*<y)=\frac{3}{2(x-y)^2}.
\end{equation}
(see \cite{serpent} for a proof). Therefore, we can define the conditional probability measure $\N_x(\cdot\,|\,W_*<y)$. Moreover, one can prove that under $\N_x$ or $\N_x^{(t)}$, a.e, there exists a unique $s_*\in[0,\sigma]$ such that $\widehat{W}_{s_*}=W_*$ (see e.g. Proposition 2.5 in \cite{Conditionnedbrowniantrees}).

Finally, these measures satisfy a scaling property. For every $\lambda>0$ and $\omega\in\mathfrak{S}_x$, we define $\Theta_\lambda(\omega)\in\mathfrak{S}_{x\sqrt{\lambda}}$ by $\Theta_\lambda(\omega)=\omega'$ with
\begin{equation}\label{scaling}
    \omega'_s(t):=\sqrt{\lambda}\,\omega_{s/\lambda^2}(t/\lambda),\quad\text{for $s\geq 0$ and $0\leq t\leq\zeta_s':=\lambda\zeta_{s\lambda^2}$}. 
\end{equation}
Then, the pushforward of $\N_x$ by $\Theta_\lambda$ is $\lambda\N_{x\sqrt{\lambda}}$, and for every $t>0$, the pushforward of $\N_x^{(t)}$ by $\Theta_\lambda$ is $\N_{x\sqrt{\lambda}}^{(\lambda^2t)}$.

\subsection{The Brownian sphere}\label{sphere}

Fix a snake trajectory $\omega\in\mathfrak{S}_0$ with duration $\sigma$.  We introduce, for every $u,v\in\cT_\omega$,
\[D^\circ_{(\omega)}(u,v)=\ell_u+\ell_v-2\max\bigg(\min_{r\in[u,v]}\ell_r,\min_{r\in[v,u]}\ell_r\bigg)\]
and  
\begin{equation}
    D_{(\omega)}(u,v)=\inf\bigg\{\sum_{i=1}^p D^\circ_{(\omega)}(u_i,u_{i-1})\bigg\}
\end{equation}
where the infimum is taken over all integers $p\geq 1$ and sequences $u_0,...,u_p\in\cT_\omega$ such that $u_0=u$ and $u_p=v$. Note that $D_{(\omega)}\leq D^\circ_{(\omega)}$.\\
Observe that $D^\circ_{(\omega)}(u,v)\geq|\ell_u-\ell_v|$, which translates into a simple (but very useful) bound: 
\begin{equation}\label{Bound}
    D_{(\omega)}(u,v)\geq|\ell_u-\ell_v|.
\end{equation}
The mapping $(u,v)\mapsto D_{(\omega)}(u,v)$ defines a pseudo-distance on $\cT_\omega$. This allows us to introduce a quotient space $\cT_\omega/\{D_{(\omega)}=0\}$, which is equipped with the distance naturally induced by $D_{(\omega)}$. \\
We can now apply the previous construction with a random snake trajectory. 
\begin{definition}
    The standard Brownian sphere is defined under the probability measure $\N_0^{(1)}$ as the random metric space $\cS=\cT/\{D=0\}$ equipped with the distance $D$, and a volume measure $\mu$ which is the pushforward of the volume measure on $\cT$ under the canonical projection $p_\cS:\cT\rightarrow\cS$. 
\end{definition}

Observe that the labelling function $\ell$ can be defined on $\cS$. Therefore, for every $x\in\cS$, we let by $\ell_x$ stands for the label of $x$.

We also introduce the free Brownian sphere, which is defined in the same way replacing $\N_0^{(1)}$ by $\N_0$; even though this is not a random variable anymore, it is often more convenient to work with this object. Note that we can also see the standard Brownian sphere (or the free Brownian sphere) as a quotient of $[0,1]$ (or $[0,\sigma]$). We will sometimes use this point of view, and we write $\mathbf{p}:[0,1]\rightarrow\cS$ for the canonical projection. We also set $x_0=\mathbf{p}(0)$.

As mentioned earlier, almost surely, there exists a unique element $u_*\in\cT$ such that $\ell_{u_*}=\inf_{u\in\cT}\ell_u=W_*$. Therefore, we write $x_*=p_{\cS}(u_*)$ and $\ell_*=\ell_{x_*}$. Note that the bound \eqref{Bound} together with the inequality $D\leq D^\circ$ implies that almost surely, for every $x\in\cS$, 
\[D(x,x_*)=\ell_x-\ell_*.\]
In particular, we have 
\[D(x_0,x_*)=-\ell_*.\]
The following proposition, proved in \cite{TopologicalStructure}, completely characterizes the points of $\cT$ that are identified in the Brownian sphere.
\begin{proposition}\label{Identification}
    Almost surely, for every $u,v\in\cT$, we have 
    \[D(u,v)=0\Longleftrightarrow D^\circ(u,v)=0.\]
\end{proposition}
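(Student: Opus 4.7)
The easy direction $D^\circ(u,v)=0 \Rightarrow D(u,v)=0$ is immediate from $D \le D^\circ$. For the converse, introduce the relation $\approx$ on $\cT$ defined by $u \approx v \iff D^\circ(u,v)=0$. Using the explicit formula for $D^\circ$, this rewrites as: $\ell_u = \ell_v =: \ell$ and at least one of the two cyclic arcs of $\cT$ joining $u$ to $v$ has label-minimum equal to $\ell$. My plan is to prove that $\approx$ is almost surely an equivalence relation on $\cT$, and then to deduce from this that $D(u,v)=0$ already forces $u \approx v$ by a chain-and-limit argument.

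\textbf{Transitivity of $\approx$.} This is the crux. Assume $u,w,v$ pairwise distinct with $u \approx w \approx v$ and common label $\ell$, witnessed by arcs $A_{uw}$ and $A_{wv}$, each with label-minimum $\ell$. Removing $w$ partitions $\cT \setminus\{w\}$ into at most three connected components, and the topological arrangement of $u,v$ relative to these components has to be analyzed case by case. The key inputs are the almost-sure properties of the tip function $s \mapsto \widehat W_s$: (i) distinct local minima of $\widehat W$ have distinct values, (ii) $\widehat W$ is nowhere locally constant, and (iii) $u_*$ is the unique minimizer of $\ell$. Properties (i)--(iii) rule out the degenerate configurations in which three arcs could share $\ell$ as a common minimum in incompatible ways, and let one show that the complementary cyclic arc from $u$ to $v$ also attains $\ell$ as its label-minimum, i.e. $u \approx v$.

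\textbf{Reduction of $D=0$ to $\approx$.} Suppose $D(u,v)=0$. For each $n$ pick a chain $u = u_0^n, u_1^n, \ldots, u_{p_n}^n = v$ with $\sum_i D^\circ(u_{i-1}^n, u_i^n) < 1/n$. The bound \eqref{Bound} forces $|\ell_{u_i^n} - \ell_u| < 1/n$ for every $i$, so in particular $\ell_u = \ell_v =: \ell$. By compactness of $\cT$ for the Hausdorff distance on closed subsets, pass to a subsequential limit $K \ni u,v$ of the finite sets $\{u_0^n,\dots,u_{p_n}^n\}$. On $K$ the label is constantly $\ell$, and any two points of $K$ arising as limits of consecutive $u_i^n$'s satisfy $\approx$, since $D^\circ$ is continuous and $\{D^\circ=0\}$ is therefore closed. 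Transitivity from the previous step propagates the relation along $K$ and yields $u \approx v$, that is $D^\circ(u,v)=0$.

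\textbf{Main obstacle.} The chain-and-limit reduction is standard once $\approx$ is known to be closed and transitive; the genuine difficulty is the transitivity of $\approx$. Excluding three distinct points at a common label level whose pairwise arcs each realize that level as a minimum requires the fine a.s.\ regularity of the Brownian-snake tip process along the CRT, in the spirit of the classical analysis of pinch points in the Brownian map; this is the step where one actually invokes the probabilistic structure of $\omega$ under $\N_0^{(1)}$ rather than merely deterministic properties of the construction.
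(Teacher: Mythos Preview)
Note first that the paper does not prove this proposition: it is quoted from \cite{TopologicalStructure}, so there is no in-paper argument to compare against. The question is whether your outline could actually be completed.

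The transitivity of $\approx$ is true (in \cite{TopologicalStructure} it is shown that $\{D^\circ=0\}$ is an equivalence relation whose classes have at most three elements), and the ingredients you name --- distinct values at distinct local minima of $\widehat W$, no flat stretches, uniqueness of the global minimizer --- are the right ones, though your sketch is thin.

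The genuine gap is your reduction step. The claim that closedness and transitivity of $\approx$ let you ``propagate the relation along $K$'' is unjustified, and the underlying mechanism is false in general. Take the toy example $\cT=[0,1]$ with $D^\circ(x,y)=(x-y)^2$: then $D^\circ$ is continuous and $\{D^\circ=0\}$ is equality (a closed equivalence relation), yet the chain $0,\tfrac{1}{N},\tfrac{2}{N},\dots,1$ has total $D^\circ$-cost $N\cdot N^{-2}=N^{-1}\to 0$, so $D\equiv 0$. In this example your Hausdorff limit $K$ is all of $[0,1]$, and ``limits of consecutive $u_i^n$'' always coincide (consecutive points are $1/N$ apart), so transitivity is vacuous and nothing connects $0$ to $1$ through $\approx$. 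Back in the Brownian sphere, your set $K$ lies in a level set of $\ell$ on $\cT$, which is typically uncountable and totally disconnected; there is neither a finite $\approx$-chain nor a continuous path along which to ``propagate''.

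What the proof in \cite{TopologicalStructure} supplies --- and what your argument is missing --- is a lower bound on $D$ that sees the tree structure, not merely the endpoint labels as in \eqref{Bound}. One must show directly that if $D(u,v)=0$ then along one of the two arcs between $u$ and $v$ the label cannot drop below $\ell_u$; this uses a fine analysis of the joint process $(\zeta,\widehat W)$ (increase points, re-rooting invariance) and is where the real probabilistic input lives. Transitivity of $\approx$, which you flag as the main obstacle, is actually the easier half.
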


In Section 4, we will prove that Theorem \ref{hub} implies the following result, which is of independent interest.

\begin{proposition}\label{alignement}
    Let $u,v\in\cT$ such that 
   \[D^\circ(u,v)=D(u,v).\]
   Then, $(x_*,p_{\cS}(u),p_{\cS}(v))$ are aligned, meaning that they are on a common geodesic.  
\end{proposition}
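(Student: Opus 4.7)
The plan is to reduce the non-alignment case to a forbidden $3^+$-hub. Write $x := p_\cS(u)$, $y := p_\cS(v)$, and assume without loss of generality that the maximum in the definition of $D^\circ(u,v)$ is attained on $[v,u]$, so that $D^\circ(u,v) = \ell_u + \ell_v - 2m$ with $m := \min_{r \in [v,u]} \ell_r$. Pick $w \in [v,u]$ realizing this minimum, $\ell_w = m$, and set $z := p_\cS(w)$.

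First I would show that $D(x,z) = \ell_x - m$ and $D(y,z) = \ell_y - m$. The lower bounds follow from \eqref{Bound}; the matching upper bounds come from $D \leq D^\circ$ applied to $(u,w)$ and $(v,w)$, together with the key observation that one of the two arcs between these points in $\cT$ is a sub-arc of $[v,u]$ containing $w$, hence has minimum label exactly $m$ (while the other arc goes through $u$ or $v$ and has a smaller or equal minimum, so the maximum of the two is indeed $m$). Consequently $D(x,z) + D(z,y) = \ell_x + \ell_y - 2m = D(x,y)$, and the concatenation of geodesics $x \to z$ and $z \to y$ is a geodesic $\gamma$ from $x$ to $y$ passing through $z$.

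I then dispatch the three degenerate cases. If $m = \ell_*$, then $z = x_*$ by uniqueness of the minimum-label point in $\cT$, and $\gamma$ already witnesses the alignment. If $m = \ell_x$, then $z = x$ in $\cS$ since $D(x,z) = 0$, the distance $D(x,y)$ equals $\ell_y - \ell_x$, and concatenating the geodesic from $x_*$ to $x$ with $\gamma$ produces a path $x_* \to x \to y$ of length $\ell_y - \ell_* = D(x_*, y)$, hence a geodesic; the case $m = \ell_y$ is symmetric. From now on I may therefore assume $\ell_* < m < \min(\ell_x, \ell_y)$.

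It then suffices to exhibit a $3^+$-hub at $z$, which will contradict Theorem \ref{hub}. Let $\gamma_1, \gamma_2$ be the two halves of $\gamma$ (from $z$ to $x$ and from $z$ to $y$) and let $\gamma_3$ be any geodesic from $z$ to $x_*$; their lengths are $\ell_x - m$, $\ell_y - m$, $m - \ell_*$, all strictly positive. The three pairwise length-sums equal $D(x,y)$, $D(x,x_*)$, $D(y,x_*)$, so each pairwise concatenation is a geodesic. For pairwise disjointness of the interiors: any geodesic in a metric space is injective (a coincidence $\gamma(t_1) = \gamma(t_2)$ forces $|t_1 - t_2| = D(\gamma(t_1), \gamma(t_2)) = 0$), so $\gamma_1 \cap \gamma_2 = \{z\}$; along $\gamma_3$ the label strictly decreases from $m$ to $\ell_*$ since $D(\gamma_3(t), x_*) = \ell_{\gamma_3(t)} - \ell_*$; and along $\gamma_1$, combining $D(z,p) + D(p,x) = \ell_x - m$ with the bounds $D(z,p) \geq |\ell_p - m|$ and $D(p,x) \geq |\ell_p - \ell_x|$ from \eqref{Bound} forces $\ell_p \in [m, \ell_x]$ with $\ell_p = m + D(z,p)$, hence strictly monotone from $m$ to $\ell_x$ (symmetrically for $\gamma_2$). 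Thus $\gamma_3$ can meet $\gamma_1$ or $\gamma_2$ only at a point of common label $m$, which is attained uniquely at $z$ on each arm. The main technical obstacle I anticipate is precisely this label-monotonicity argument on the three arms; once it is in place, the $3^+$-hub structure and the contradiction with Theorem \ref{hub} are immediate.
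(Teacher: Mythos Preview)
Your proof is correct and follows essentially the same strategy as the paper: locate the minimum-label point $w$ on the relevant tree arc, show that $p_\cS(w)$ lies on a geodesic between $p_\cS(u)$ and $p_\cS(v)$, and then argue that together with a geodesic to $x_*$ this makes $p_\cS(w)$ a $3^+$-hub, contradicting Theorem~\ref{hub}. Your treatment is in fact more careful than the paper's on two points: you handle the degenerate cases $m\in\{\ell_*,\ell_x,\ell_y\}$ explicitly, and your label-monotonicity argument verifies the pairwise disjointness of the three arms, which the paper's proof simply asserts in its final sentence.
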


\subsection{Brownian slices}\label{slice}

Here, we introduce the notion of Brownian slice, which plays a major role in the proofs to come. Fix a snake trajectory $\omega\in\mathfrak{S}_0$ with duration $\sigma$. We define a pseudo-distance $\Tilde{d}$ on $[0,\sigma]$ by 
\begin{equation*}
    \Tilde{d}_{(\omega)}(s,t)=\widehat{W}_s+\widehat{W}_t-2\inf_{r\in [s\land t,s\lor t]}\widehat{W}_r.
\end{equation*}
Then, similarly to what we did to construct the Brownian sphere, we can define a pseudo-distance $\Tilde{D}_{(\omega)}^\circ$ on $\cT_\omega$:
\begin{equation}\label{pseudo dist slice}
    \Tilde{D}_{(\omega)}^\circ(u,v)=\inf\{\Tilde{d}_{(\omega)}(s,t):s,t\in[0,\sigma],p_{\cT_\omega}(s)=u,p_{\cT_\omega}(t)=v\}.
\end{equation}
The difference with the distance $D^\circ$ of Section \ref{sphere} is that we forbid  ``to go around the root of $\cT_\omega$''  when computing the distance. 
Finally, we can define another pseudo-distance on $\cT_\omega$ :
\begin{equation*}
    \Tilde{D}_{(\omega)}(u,v)=\inf_{u_0,...,u_p}\sum_{i=1}^p\Tilde{D}_{(\omega)}^\circ(u_i,u_{i-1})
\end{equation*}
where the infimum is taken over every $p\in\N^*$ and sequences in $\cT$ such that $u_0=u$ and $u_p=v$.
\begin{definition}
    The \textit{free Brownian slice} is defined under the measure $\N_0$ as the metric space $\Tilde{\cS}=\cT/\{\Tilde{D}=0\}$, equipped with the distance $\Tilde{D}$. We write $p_{\Tilde{\cS}}:\cT\rightarrow \Tilde{\cS}$ for the canonical projection, $\Tilde{\mathbf{p}}$ for the projection $[0,\sigma]\rightarrow\Tilde{\cS}$ and $\Tilde{\rho}=\Tilde{\mathbf{p}}(0)$.
\end{definition}
This space has already been studied in \cite{uniqueness} to prove the convergence of quadrangulations toward the Brownian sphere, and in \cite{Browniandisk} to prove the convergence of quadrangulation with a boundary toward the Brownian disk (see also \cite{Geodesicstars}). It is also the scaling limit of some models of random planar maps with geodesic boundaries.

Let us explain how this space is related to the Brownian sphere $\cS$. It was proved in \cite{geodesic1} that almost surely, there exists a unique geodesic $\Gamma$ in $\cS$ between $x_0$ and $x_*$. Then, if we cut $\cS$ along the geodesic $\Gamma$, the resulting space is a Brownian slice. In particular, this space has a boundary made of two geodesic segments, which correspond to the geodesic $\Gamma
$ that has been cut (see \cite[Section 3.2]{uniqueness} for more details).

Note that because $d\leq\Tilde{d}$, we have $D\leq\Tilde{D}$. Furthermore, $\Tilde{\cS}$ has the same scaling property as the Brownian sphere. 

\subsection{Coding labelled trees with triples}\label{Triple}

Here, we briefly explain how to encode a labelled tree by a triple $(X,\mathcal{N}_l,\mathcal{N}_r)$. We refer to \cite[Section 2.4]{Spinedecomposition} for more details.

Consider a triple $(X,\mathcal{N}_l,\mathcal{N}_r)$, where 
\begin{itemize}[label=\textbullet]
    \item $X=(X_t)_{t\in[0,h]}$ is a random path,
    \item $\mathcal{N}_l$ and $\mathcal{N}_r$ are two random point measures on $[0,h]\times\mathfrak{S}$.    
\end{itemize}
Then, under some natural assumptions, one can define a labelled tree $\cT$ from this triple, made of a spine of length $h$, and where each atom $(t_i,\omega_i)$ of $\mathcal{N}_l$ (respectively $\mathcal{N}_r$) represents a labelled subtree isometric $\cT_{\omega_i}$ branching off the left side (respectively the right side) of the spine at height $t_i$. Moreover, the labels on the spine are given by the process $X$. This tree is rooted at the bottom of the spine, and has a distinguished point, which is the top of the spine. 

One can also define an exploration process for the tree $\cT$, which allows us to define intervals on this tree. Furthermore, it is possible to represent the labelled tree $\cT$ by a snake trajectory $\omega\in\mathfrak{S}$ such that $\cT_\omega=\cT$. Therefore, one can construct a random metric space $(S,d)$ and a projection $p_S:\cT\rightarrow S$ from any admissible triple $(X,\mathcal{N}_l,\mathcal{N}_r)$, as explained in Section \ref{sphere} and \ref{slice}. 

\section{Proof of Theorem \ref{hub}}

As mentioned in the introduction, the proof will consist of three steps.

\subsection{Marking three points in the Brownian sphere}

We start by giving a construction of a Brownian sphere with a distinguished triple $(x_0,x_1,x_*)$ of typical points, and prove that we can restrict our study to this model. 
First, we show that the set of compact metric spaces with three distinguished points which bordered a $3^+$-hub is Borel. We refer to \cite[Section 6.4]{tessalations} for details about the marked Gromov-Hausdorff topology.
\begin{lemme}\label{mesurable}
    Let $\mathbb{M}^{\bullet\bullet\bullet}$ be the set of isometry classes of triply-pointed compact metric spaces, equipped with the Gromov-Hausdorff topology. Then, the set 
    \[\mathcal{H}=\{(M,x_1,x_2,x_3)\in\mathbb{M}^{\bullet\bullet\bullet},\,M\text{ is a geodesic space and }(x_1,x_2,x_3)\text{ borders a $3^+$-hub }\}\] is a Borel set. 
\end{lemme}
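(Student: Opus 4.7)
\emph{Proof plan.} The plan rests on the observation that, in a compact geodesic space, the ``disjoint arms'' condition in the definition of a $3^+$-hub is automatically implied by the ``concatenation-is-a-geodesic'' condition. This simplifies $\mathcal{H}$ to a set cut out purely by distance conditions, which will turn out to be closed and hence Borel.

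The first step is to establish the following reformulation: in a compact geodesic space $M$, the triple $(x_1,x_2,x_3)$ borders a $3^+$-hub if and only if there exists $y\in M$ with $d(x_i,x_j)=d(y,x_i)+d(y,x_j)$ for every $i\neq j$. Only the ``if'' direction requires an argument. Since $M$ is geodesic, one can pick any geodesic arms $\gamma_i:[0,d(y,x_i)]\to M$ from $y$ to $x_i$. If two arms met at a common point $z=\gamma_i(s)=\gamma_j(s')$ with $s,s'>0$, then $d(y,z)=s=s'$ (by the geodesic equations), whence $d(z,x_i)=d(y,x_i)-s$ and $d(z,x_j)=d(y,x_j)-s$; the triangle inequality would then give $d(x_i,x_j)\le d(y,x_i)+d(y,x_j)-2s$, contradicting the distance identity. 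Hence any choice of geodesic arms is automatically pairwise disjoint away from $y$.

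The second step is to show that the reformulated condition defines a closed subset of $\mathbb{M}^{\bullet\bullet\bullet}$. The set of compact geodesic metric spaces is closed in the Gromov-Hausdorff topology, via the standard characterization by the existence of midpoints between every pair of points. For a given $(M,x_1,x_2,x_3)$ the existence of an apex $y$ is equivalent to $d(y,x_i)=\alpha_i$ for the three numbers $\alpha_i=\tfrac{1}{2}(d(x_i,x_j)+d(x_i,x_k)-d(x_j,x_k))$ determined by the mutual distances. Equivalently, $\min_{y\in M}\max_i |d(y,x_i)-\alpha_i|=0$, and this minimum is a continuous function of the input for the marked Gromov-Hausdorff topology. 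Concretely, from a convergent sequence $(M_n,x_1^n,x_2^n,x_3^n)\to(M,x_1,x_2,x_3)$ with apices $y_n\in M_n$, compactness in a common ambient realization of the GH convergence yields a subsequential limit $y\in M$, which inherits the required distance equalities by continuity. Thus $\mathcal{H}$ is closed in $\mathbb{M}^{\bullet\bullet\bullet}$, hence Borel.

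The main obstacle is precisely the first step: one has to notice that the disjointness clause is redundant. The naive alternative would be to express the hub condition as the projection of a $G_\delta$ disjointness condition from the fiberwise-compact space of admissible geodesic triples, which generally only yields an analytic set, and some additional work would be needed to promote it to Borel. The triangle-inequality shortcut above reduces the problem to a purely metric existence statement, after which the topological verification is routine.
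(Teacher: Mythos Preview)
Your argument is correct and rests on the same reduction as the paper's proof: in a geodesic space, $(x_1,x_2,x_3)$ borders a $3^+$-hub exactly when there is a point $w$ with $d(x_i,w)+d(w,x_j)=d(x_i,x_j)$ for every pair. The paper packages this as $\mathcal H=\bigcup_m\bigcap_n\mathcal H_{n,m}$ with the $\mathcal H_{n,m}$ open and recovers the equality via a compactness argument \emph{inside} a fixed space, whereas you pass directly to Gromov--Hausdorff limits and obtain the sharper conclusion that $\mathcal H$ is actually closed. You also make explicit the triangle-inequality step showing that the disjointness clause in the definition of a hub is automatic once the distance identities hold; the paper only asserts this (``we can easily deduce from this equality that $w_\infty$ is a $3^+$-hub''). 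One small caveat: your closedness claim tacitly allows the apex $y$ to coincide with one of the $x_i$ (giving a degenerate arm of length zero); the paper's characterisation has the same feature, and for the intended application it is harmless.
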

\begin{proof}
First, recall that the set of geodesic spaces is closed in $\M$ (see \cite[Theorem 7.5.1]{burago}). Therefore, in what follows, every space considered is a geodesic space.

    For every $n,m\geq1$, let $\mathcal{H}_{n,m}$ be the set of $(M,x_1,x_2,x_3)\in\mathbb{M}^{\bullet\bullet\bullet}$ such that 
    \begin{itemize}
        \item for every $i\neq j$ in $\{1,2,3\}$,
        ,
        \[d(x_i,x_j)>\frac{1}{m},\]
        \item There exists $w\in M$ such that for every $i\neq j$ in $\{1,2,3\}$,
        \begin{equation}\label{Presque hub}
            d(x_i,w)+d(w,x_j)<d(x_i,x_j)+\frac{1}{n}.
        \end{equation}
    \end{itemize}
    The first condition guarantees that $(x_1,x_2,x_3)$ are disjoint points, and the second one means that they almost border a $3^+$-hub. Observe that $\mathcal{H}_{n,m}$ is an open set. Therefore, the set 
    \[\bigcup_{m\geq1}\bigcap_{n\geq1}\mathcal{H}_{n,m}\] is a Borel set. Moreover, we clearly have 
    \[\mathcal{H}\subset\bigcup_{m\geq1}\bigcap_{n\geq1}\mathcal{H}_{n,m}\] (we can choose $w$ in \eqref{Presque hub} as the $3^+$-hub bordered by $(x_1,x_2,x_3)$). Let us show a converse inclusion, which will give the desired result. Fix $(M,x_1,x_2,x_3)\in\mathbb{M}^{\bullet\bullet\bullet}$ and suppose that there exists $m\geq1$ such that 
    \[M\in\bigcap_{n\geq1}\mathcal{H}_{n,m}.\]
    For every $n\geq1$, consider $w_n\in M$ such that \eqref{Presque hub} holds with this choice. By compactness, we can suppose that the sequence $(w_n)_{n\geq1}$ converges toward some element $w_\infty\in M$. Moreover, for every $i\neq j$ in $\{1,2,3\}$, we have 
        \begin{equation*}
            d(x_i,w_\infty)+d(w_\infty,x_j)=d(x_i,x_j).
        \end{equation*}
Since $x_1,x_2$ and $x_3$ are disjoint elements, we can easily deduce from this equality that $w_\infty$ is a $3^+$-hub, which gives $M\in\mathcal{H}$, and conclude the proof.   
\end{proof}

Now, we show that we just need to consider $3^+$-hubs bordered by typical points. 

\begin{proposition}\label{reduction}
    Let $(x_0,x_1,x_2)$ be three points of the standard Brownian sphere $\cS$ distributed according to the volume measure $\mu$. Then, 
    \[\P(\text{There exists a $3^+$-hub})>0\quad\text{ if and only if }\quad\P((x_0,x_1,x_2)\text{ borders a $3^+$-hub})>0.\]    
\end{proposition}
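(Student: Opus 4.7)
The plan is to treat the two directions separately. The easy implication (typical-point bordering $\Rightarrow$ existence) is immediate. I focus on the converse: assume $\P(\exists\,3^+\text{-hub in }\cS)>0$ and show that $\P((x_0,x_1,x_2)\text{ borders a }3^+\text{-hub})>0$. By Lemma \ref{mesurable}, the set
\[\cH(\cS):=\{(z_0,z_1,z_2)\in\cS^3:(z_0,z_1,z_2)\text{ borders a }3^+\text{-hub}\}\]
is Borel. Since $\mu(\cS)=1$ almost surely, Fubini gives
\[\P((x_0,x_1,x_2)\text{ borders a }3^+\text{-hub})=\E\bigl[\mu^{\otimes3}(\cH(\cS))\bigr],\]
so it will be enough to prove that $\mu^{\otimes3}(\cH(\cS))>0$ with positive probability.

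On the event $\{\exists\,3^+\text{-hub}\}$, I fix such a hub $x$ with arms $\gamma_1,\gamma_2,\gamma_3$ of positive lengths $\tau_1,\tau_2,\tau_3$, and for each $i$ a point $p_i:=\gamma_i(s_i)$ with $s_i\in(0,\tau_i)$. The first step is to construct a Borel set $A_i\subset\cS$ with $\mu(A_i)>0$, consisting of points $z\in\cS$ admitting a geodesic from $x$ to $z$ whose restriction to $[0,s_i]$ coincides with $\gamma_i([0,s_i])$; equivalently, $z$ is reached from $x$ by prolonging the segment $[x,p_i]$ into a geodesic. Such sets should be produced by means of the approximation of geodesics from \cite{geodesic2}: any geodesic emanating from $x$ is prolongable to a $\mu$-positive family of endpoints, and the Bismut decomposition of the snake at a uniformly chosen point supplies the tree-level description needed to make this assertion quantitative.

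Granted the $A_i$'s, I will verify that every $(z_1,z_2,z_3)\in A_1\times A_2\times A_3$ borders a $3^+$-hub with apex $x$. For $i\neq j$, the hub property of $x$ gives $d(p_i,p_j)=s_i+s_j$ via a geodesic through $x$, so the concatenation of the geodesics from $z_i$ to $p_i$, from $p_i$ through $x$ to $p_j$, and from $p_j$ to $z_j$ is a path of length $d(z_i,x)+d(x,z_j)$. The upper bound $d(z_i,z_j)\leq d(z_i,x)+d(x,z_j)$ is the triangle inequality, while the matching lower bound should follow from a tree-like non-merging property for geodesics issued from $x$ along distinct arms in the Brownian sphere, a feature encoded in the snake that I expect to extract from Proposition \ref{Identification} together with the approximation results. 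Disjointness of the three prolonged arms on a $\mu$-generic triple is obtained similarly. This yields $\mu^{\otimes3}(\cH(\cS))\geq\mu(A_1)\mu(A_2)\mu(A_3)>0$ on the event at stake, and hence the desired conclusion.

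The main obstacle is twofold: producing the sets $A_i$ with a quantitative lower bound on $\mu(A_i)$ on the event that a hub exists, and verifying that the prolonged arms still concatenate into geodesics and stay disjoint. Both points hinge on a careful combination of the approximation results of \cite{geodesic2} with the Bismut decomposition of $\N_0$ alluded to in the introductory sketch of Theorem \ref{hub}.
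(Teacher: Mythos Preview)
Your overall framework (Fubini, reducing to $\mu^{\otimes3}(\cH(\cS))>0$) is correct, and so is the easy direction. But the hard direction has a genuine gap at the step you flag yourself: the ``matching lower bound'' $D(z_i,z_j)\geq D(z_i,x)+D(x,z_j)$ for arbitrary $z_i\in A_i$. Your sets $A_i$ consist of all points whose geodesic to $x$ begins along the $i$-th arm; for such a point $z_i$ far from $\gamma_i(\tau_i)$ there is no reason whatsoever that the geodesic from $z_i$ to $z_j$ should pass through $x$, and no ``tree-like non-merging property'' of the kind you invoke is known (or expected) at an exceptional point like a putative hub. Proposition~\ref{Identification} is a statement about which tree points are glued, not about geodesic concatenation, and does not supply this bound. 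So as written, the claim that every triple in $A_1\times A_2\times A_3$ borders a hub is unsupported and almost certainly false.

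The paper's proof is much shorter and avoids this obstacle entirely. Rather than extending the arms outward, it perturbs the \emph{endpoints} $(\gamma_1(\tau_1),\gamma_2(\tau_2),\gamma_3(\tau_3))$ and invokes the strong-confluence/approximation result \cite[Theorem~1.7]{geodesic2}: there exist open neighbourhoods $U,V,W$ of these three points such that for every $(z_1,z_2,z_3)\in U\times V\times W$, each geodesic between $z_i$ and $z_j$ coincides with the original one except near the endpoints, hence still passes through $x$. This immediately gives $\mu^{\otimes3}(\cH(\cS))\geq\mu(U)\mu(V)\mu(W)>0$. If you want to repair your argument, you should replace your $A_i$ by small balls around $\gamma_i(\tau_i)$ and cite the confluence theorem directly; the prolongation mechanism and the appeal to Bismut are unnecessary here.
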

\begin{proof}
    Since $\left\{\text{There exists a $3^+$-hub}\right\}\supset\{(x_0,x_1,x_2)\text{ borders a $3^+$-hub})\}$, one implication is straightforward. Conversely, suppose that there exists a $3^+$-hub with a positive probability. If a triple $(u,v,w)$ borders such a hub, by a result of approximations of geodesics \cite[Theorem 1.7]{geodesic2}, there exist neighborhoods $(U,V,W)$ of $(u,v,w)$ such that for every $x_0\in U,x_1\in V,x_2\in W$, the triple $(x_0,x_1,x_2)$ borders a $3^+$-hub. This proves the result, since these neighborhoods have a strictly positive $\mu$-measure. 
\end{proof}

Then, we will define the random trees and surfaces that we will be dealing with. For every $a>0$, let $B^{(a)}=(B^{(a)}_t)_{t\in[0,a]}$ be a Brownian motion starting from $0$ of duration $a$, and given $B^{(a)}$, let $\mathcal{N}_l^{(a)}$ and $\mathcal{N}_r^{(a)}$ be two independent Poisson point measures on $[0,a]\times\S$, with intensity 
\[2\1_{[0,a]}(t)\N_{B_t^{(a)}}(d\omega)dt.\]

As explained in Section \ref{Triple}, we can associate a random labelled tree $\cT_a$ to the triple $(B^{(a)},\mathcal{N}_l^{(a)},\mathcal{N}_r^{(a)})$. This tree has two distinguished points, called $\rho_0$ and $\rho_a$, which are respectively the bottom and the top of the spine. Let $\cS_a$ be the random metric space associated to $\cT_a$, and $p_{\cS_a}:\cT_a\rightarrow \cS_a$. This space comes with three distinguished points, which are 
\[x_0=p_{\cS_a}(\rho_0),\quad x_a=p_{\cS_a}(\rho_a),\quad x_*=p_{\cS_a}(u_*).\]
Now, we will explain how these trees and spaces are related to the measure $\N_0$.

Arguing for $\N_0(d\omega)$, for every $s\in(0,\sigma)$, we can encode  the labelled subtrees branching off the ancestral line of $p_\cT(s)$ by two point measures $\cP_l^{(s)}$ and $\cP_r^{(s)}$. More precisely, we consider the connected components $(u_i,v_i),i\in I$ of the open set $\{r\in[0,s]:\zeta_r(\omega)>\min_{t\in[r,s]}\zeta_t(\omega)\}$. For every $i\in I$, we can define a snake trajectory $\omega^i$ of duration $\sigma(\omega^i)=v_i-u_i$, by setting for every $r\in[0,\sigma(\omega^i)]$,
\[\omega^i_r(t)=\omega_{u_i+r}(\zeta_{u_i}(\omega)+t),\quad\text{ for }0\leq t\leq \zeta_{\omega_r^i}=\zeta_{u_i+r}(\omega)-\zeta_{u_i}(\omega).\] Then, we can define a point measure $\cP_l^{(s)}$ by 
\[\cP_l^{(s)}=\sum_{i\in I}\delta_{(\zeta_{u_i},\omega^i)}.\] Similarly, one can define the point measure $\cP_r^{(s)}$, by replacing $[0,s]$ by $[s,\sigma]$. 
The following proposition (which a consequence of \cite[Proposition 3.5]{Randomtrees} and \cite[Lemma 3.7]{Randomtrees}) makes the link between the trees $\cT_a$ and the labelled tree associated to the measure $\N_0$. 
\begin{proposition}\label{decomposition}
    Let $M_p(\R_+\times\S)$ be the set of point measures on $\R_+\times\S$. Then, for any non-negative Borel measurable function $F$ on $\cW\times M_p(\R_+\times\S)^2$, 
    \[\N_0\left(\int_0^\sigma F(W_s,\cP_l^{(s)},\cP_r^{(s)})ds\right)=\int_0^\infty\E\left[F(B^{(a)},\cN_l^{(a)},\cN_r^{(a)})\right]da.\]
\end{proposition}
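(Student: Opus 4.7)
My plan is to combine two classical ingredients: a Bismut-type decomposition of It\^o's Brownian excursion measure for the lifetime process $\zeta$, and the Gaussian structure of the labels $\widehat{W}$ conditionally on $\zeta$. The right-hand side of the identity is precisely what one obtains by picking a uniform time $s\in[0,\sigma]$, reading off the length $a=\zeta_s$ of the ancestral line of $p_\cT(s)$, and describing independently the labels and subtrees on each side of that spine.

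First, I will apply the Bismut decomposition at the level of the lifetime function. Since under $\N_0$ the process $(\zeta_r)_{r\in[0,\sigma]}$ follows It\^o's positive excursion measure with the normalization fixed in Section \ref{snake}, one has for any non-negative Borel $\Phi$
\[\N_0\!\left(\int_0^{\sigma}\Phi\bigl(\zeta_s,(\zeta_r)_{r\in[0,s]},(\zeta_r)_{r\in[s,\sigma]}\bigr)ds\right)=\int_0^\infty\!\E\bigl[\Phi(a,\zeta^{-,a},\zeta^{+,a})\bigr]\,da,\]
where $\zeta^{\pm,a}$ are two independent reflected Brownian motions started at $a$ and killed upon hitting $0$. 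In particular, the spine length $\zeta_s$ is sampled against Lebesgue weight $da$ and the two halves of $\zeta$ become independent.

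Next, I will put back the labels. Conditionally on $\zeta$, the tip process $(\widehat{W}_s)$ is the centred Gaussian field with covariance $K(s,s')=\min_{s\wedge s'\le r\le s\vee s'}\zeta_r$; restricted to the ancestral line of $p_\cT(s)$, this yields that the path $W_s:[0,\zeta_s]\to\R$ is a linear Brownian motion of length $a=\zeta_s$, which becomes the process $B^{(a)}$ after the Bismut substitution. The snake property then forces each subtree branching off the spine at height $t\in[0,a]$ to be an independent snake trajectory starting from the label $B^{(a)}_t$, which is exactly the kernel $\N_{B^{(a)}_t}$ appearing in the intensity on the right-hand side.

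Finally, I will recognize the Poisson structure of the branching heights. On $[0,s]$, the connected components of $\{r:\zeta_r>\min_{t\in[r,s]}\zeta_t\}$ are the excursions above $0$ of the reflected process $\zeta_{s-\cdot}-\min_{[\cdot,s]}\zeta$, which under the Bismut decomposition is a Brownian motion killed at $0$; by It\^o's excursion theory, these branching times together with their marks form a Poisson point measure on $[0,a]\times\mathfrak{S}$. The normalization of Section \ref{snake} produces precisely the factor $2$ in the intensity $2\,\1_{[0,a]}(t)\N_{B^{(a)}_t}(d\omega)\,dt$, and the same argument on $[s,\sigma]$ delivers $\cP_r^{(s)}$, independently of $\cP_l^{(s)}$ given the spine. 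The main obstacle I anticipate is not conceptual but the careful bookkeeping of constants (in particular the factor $2$ and the length normalization) and the clean identification of the snake trajectories carried by each excursion with the Brownian snake under $\N_{B^{(a)}_t}$; this is essentially the content of \cite[Proposition 3.5]{Randomtrees} and \cite[Lemma 3.7]{Randomtrees}, whose machinery I would invoke to close the argument.
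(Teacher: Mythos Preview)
Your proposal is correct and matches the paper's approach: the paper does not give a self-contained proof of this proposition but simply states it as a consequence of \cite[Proposition 3.5]{Randomtrees} and \cite[Lemma 3.7]{Randomtrees}, adding in a remark that it is ``very similar to'' \cite[Proposition 2]{Spinedecomposition}. Your sketch---Bismut decomposition of the lifetime excursion, identification of the spine label process as a Brownian motion of length $a$, and It\^o excursion theory to recover the two Poisson point measures of subtrees with intensity $2\,\1_{[0,a]}(t)\N_{B^{(a)}_t}(d\omega)\,dt$---is precisely the content of those references, so you have essentially unpacked what the paper leaves as a citation.
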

\begin{remark}
    This decomposition is very similar to the one of \cite[Proposition 2]{Spinedecomposition}.
\end{remark}
We can finally prove that we can restrict our study to the non-existence of $3^+$-hubs in $\cS_1$.
\begin{proposition}\label{equivalence}
    There exists a $3^+$-hub in the standard Brownian sphere with a positive probability if and only if with a positive probability, $(x_0,x_1,x_*)$ borders a $3^+$-hub in $\cS_1$. 
\end{proposition}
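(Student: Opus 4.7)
The plan is to chain three equivalences. By Proposition \ref{reduction}, the Brownian sphere under $\N_0^{(1)}$ contains a $3^+$-hub with positive probability if and only if three i.i.d.~$\mu$-distributed points $(x_0, x_1, x_2)$ border such a hub with positive probability. I then want to replace $x_2$ by the minimum-label point $x_*$, and transfer the resulting event to $\cS_1$ via the Bismut decomposition (Proposition \ref{decomposition}) and the scaling property \eqref{scaling}.

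For the first replacement, I appeal to the joint re-rooting invariance of the Brownian sphere: under $\N_0^{(1)}$, conditionally on $(\cS, \mu)$, the pair $(x_0, x_*)$ has the same law as two independent $\mu$-samples. This is classical, corresponding to the scaling limit of uniform rooted-and-pointed quadrangulations, in which the root and pointed vertices are i.i.d.~uniform conditional on the unmarked map. Since $x_1$ is itself an independent $\mu$-sample, this identity gives
\[
\P_{\N_0^{(1)}}\!\bigl((x_0, x_1, x_2) \text{ borders a } 3^+\text{-hub}\bigr) = \P_{\N_0^{(1)}}\!\bigl((x_0, x_1, x_*) \text{ borders a } 3^+\text{-hub}\bigr).
\]

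Next, I apply Proposition \ref{decomposition}. The key observation is that the triple $(W_s, \cP_l^{(s)}, \cP_r^{(s)})$ encodes both the snake trajectory $\omega$ and the marked parameter $s$, hence also the triply-marked metric space $(\cS_\omega, x_0, p_\cS(s), x_*)$. By Lemma \ref{mesurable}, the event $A = \{(x_0, p_\cS(s), x_*) \text{ borders a } 3^+\text{-hub}\}$ is Borel-measurable in this triple, and Proposition \ref{decomposition} yields
\[
\N_0\!\left(\int_0^\sigma \1_A \, ds\right) = \int_0^\infty \P\!\bigl((x_0, x_a, x_*) \text{ borders a } 3^+\text{-hub in } \cS_a\bigr) \, da.
\]
Disintegrating $\N_0 = \int_0^\infty (2\sqrt{2\pi t^3})^{-1} \N_0^{(t)} \, dt$, the left-hand side is strictly positive if and only if $\P_{\N_0^{(1)}}\bigl((x_0, x_1, x_*) \text{ borders a } 3^+\text{-hub}\bigr) > 0$, where $x_1 \sim \mu$ independent.

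Finally, the scaling property \eqref{scaling} shows that for every $a > 0$, the triple $(B^{(a)}, \cN_l^{(a)}, \cN_r^{(a)})$ is a deterministic $\Theta_a$-rescaling of $(B^{(1)}, \cN_l^{(1)}, \cN_r^{(1)})$, so $\cS_a$ is a rescaling of $\cS_1$ in distribution. Since bordering a $3^+$-hub is an isometry-invariant property, the integrand on the right-hand side is independent of $a$; hence positivity of the integral is equivalent to $\P\bigl((x_0, x_1, x_*) \text{ borders a } 3^+\text{-hub in } \cS_1\bigr) > 0$. Combining the three equivalences finishes the proof. The main obstacle is the joint re-rooting identity of the first step, which is the only genuinely non-formal input; the Bismut computation and the scaling argument are routine.
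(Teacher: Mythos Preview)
Your proposal is correct and follows essentially the same route as the paper: re-rooting invariance to replace typical points by $(x_0,x_1,x_*)$, then Proposition~\ref{decomposition} (Bismut) and scaling to pass to $\cS_1$. The only difference is cosmetic: the paper quotes \cite[Proposition~3]{Geodesicstars} directly at the level of $\N_0$ (with the $1/\sigma^2$ weight), whereas you invoke the equivalent statement that $(x_0,x_*)$ are conditionally i.i.d.\ $\mu$-samples under $\N_0^{(1)}$ and then disintegrate $\N_0$ over $\sigma$; both formulations encode the same re-rooting invariance and lead to the same conclusion.
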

\begin{proof}
    First, by scaling, we can replace the standard Brownian sphere by a free Brownian sphere under $\N_0$. Then, by \cite[Proposition 3]{Geodesicstars}, for every non-negative measurable function $F$ on the space of three-pointed measure metric spaces, we have 
\[\N_0\left(\frac{1}{\sigma}\frac{1}{\sigma}\int\!\int\!\int F(\cS,x,y,z)\mu(dx)\mu(dy)\mu(dz)\right)=\N_0\left(\int F(\cS,x_*,x_0,y)\mu(dy)\right).\]
By Proposition \ref{decomposition}, this gives 
\begin{equation}\label{resample}    
\N_0\left(\frac{1}{\sigma}\frac{1}{\sigma}\int\!\int\!\int F(\cS,x,y,z)\mu(dx)\mu(dy)\mu(dz)\right)=\int_0^\infty\E[F(\cS_a,x_*,x_0,x_a)]da.
\end{equation}
By Lemma \ref{mesurable}, we can take $F(E,x,y,z)=\1_{\{(x,y,z)\text{ borders a $3^+$-hub in $E$}\}}$. For this choice of $F$ and by Proposition \ref{reduction}, we see that the left-hand side of \eqref{resample} is positive (in fact, infinite) if and only if there exists a $3^+$-hub in a standard Brownian sphere with positive probability. Furthermore, by scaling arguments, the right-hand side of \eqref{resample} is positive if and only if there is a positive probability that $(x_0,x_1,x_*)$ border a $3^+$-hub in $\cS_1$, which concludes the proof. 
\end{proof}

\subsection{Identifying the coalescence point}

 By Proposition \ref{equivalence}, we want to prove that almost surely, $(x_0,x_1,x_*)$ does not border a $3^+$-hub. In this section, we study the geodesic network bordered by $(x_0,x_1,x_*)$ in $\cS_1$. More precisely, we identify where the geodesics from $x_0$ and $x_1$ to $x_*$ merge, and show how to study these geodesics near their merging point.

To lighten notations, we set $(B^{(1)},\mathcal{N}_l^{(1)},\mathcal{N}_r^{(1)})=(B,\mathcal{N}_l,\mathcal{N}_r)$, and write $I,J$ for sets indexing the atoms of $\mathcal{N}_l^{(1)}$ and $\mathcal{N}_r^{(1)}$. We still denote by $u_*$ the element of $\cT_1$ with minimal label. As mentioned previously, the random surface $\cS_1$ comes with three distinguished points $(x_0,x_1,x_*)$, which are
\[x_0=p_{\cS_1}(\rho_0),\quad x_1=p_{\cS_1}(\rho_1),\quad x_*=p_{\cS_1}(u_*).\]
\begin{figure}
        \centering
        \includegraphics[scale=0.4]{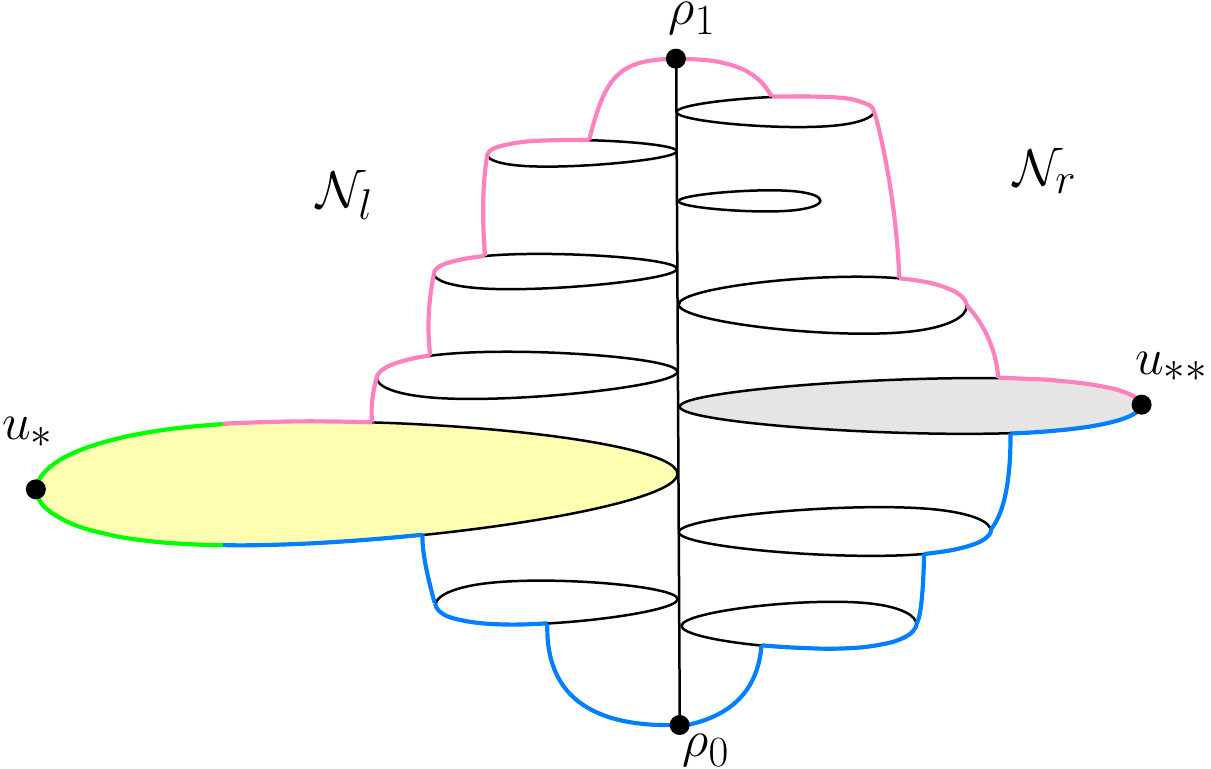}
        \caption{Representation of $T_0,\,\widehat{T}_0,\,T_1$ and $\widehat{T}_1$ in the random tree $\cT_1$. Note that the two blue (respectively pink, green) portions correspond to the same path in $\cS_1$.}
        \label{Arbre}
\end{figure}
In what follows, we will abuse notations by considering infimum and supremum in intervals of $\cT_1$. However, as explained in Section \ref{Triple}, since $\cT_1$ can be represented by a snake trajectory, these infimums and supremums are in fact infimums and supremums in some interval $[0,\sigma]$. For every $s\in[0,-\ell_*]$, we define 
\begin{align*}
    T_0(s)=\inf\{u\in[\rho_0,u_*],\ell_u=-s\}\quad\text{ and }\widehat{T}_0(s)=\sup\{u\in[u_*,\rho_0],\ell_u=-s\}.
\end{align*}
Similarly, for every $s\in[-\ell_{\rho_1},-\ell_*]$, we set
\begin{align*}
    T_1(s)=\inf\{u\in[\rho_1,u_*],\ell_u=-s\}\quad\text{ and }\widehat{T}_1(s)=\sup\{u\in[u_*,\rho_1],\ell_u=-s\}.
\end{align*}

Then, we define
\begin{align*}    \gamma_0(t)&=p_\cS(T_0(t))=p_\cS(\widehat{T}_0(t))\quad\text{ for }0\leq t\leq-\ell_*,\\    \gamma_1(t)&=p_\cS(T_1(t))=p_\cS(\widehat{T}_1(t))\quad\text{ for }-\ell_{\rho_1}\leq t\leq-\ell_*.
\end{align*}
Using \eqref{Bound} and the inequality $D\leq D^\circ$, it is easy to see that $\gamma_0$ (respectively $\gamma_1$) is a geodesic from $x_0$ to $x_*$ (respectively $x_1$ to $x_*$).
Moreover, by the main result of \cite{geodesic1}, the geodesics $\gamma_0$ and $\gamma_1$ are almost surely the unique such geodesics (the result is stated for the Brownian sphere $\cS$, but it also holds for $\cS_1$ by \eqref{resample} and scaling).

Without loss of generality, suppose that $u_*\in[\rho_0,\rho_1]$. Define $u_{**}$ as the unique element of $[\rho_1,\rho_0]$ such that $\ell_{u_{**}}=\inf\{\ell_u,u\in[\rho_1,\rho_0]\}$. In particular, let $i_*$ and $j_*$ be the indices of the unique atoms of $\cN_l$ and $\cN_r$ that contains the elements of minimal label on each side. Let us denote these elements by $u_{i_*}$ and $u_{j_*}$. 
Then, $u_*$ is the element such that \[\ell_{u_*}=\ell_{u_{i_*}}\wedge\ell_{u_{j_*}},\]
and $u_{**}$ is the one that satisfies 
\[\ell_{u_{**}}=\ell_{u_{i_*}}\vee\ell_{u_{j_*}}\]
Also, set $\ell_{**}=\ell_{u_{**}}$ and $x_{**}=p_{\cS_1}(u_{**})$. 
Observe that for every $t\in[0,-\ell_{**}),\,s\in[-\ell_{\rho_1},-\ell_{**})$,
\[D^\circ(\widehat{T}_0(t),T_1(s))>0,\]
whereas, for $t\in [-\ell_{**},-\ell_*]$,
\[D^\circ(\widehat{T}_0(t),T_1(t)=0.\]
Therefore, by Lemma \ref{Identification}, for every $t\in[0,-\ell_{**}),\,s\in[0,-\ell_{**}+\ell_{\rho_1})$,
\[\gamma_0(t)\neq\gamma_1(s))\] whereas for every $t\in [-\ell_{**},-\ell_*]$, 
\[\gamma_0(t)=\gamma_1(t+\ell_{\rho_1}).\]
In particular, the geodesics $\gamma_0$ and $\gamma_1$ coalesce at $x_{**}$. 
\begin{figure}
        \centering
        \includegraphics[scale=0.5]{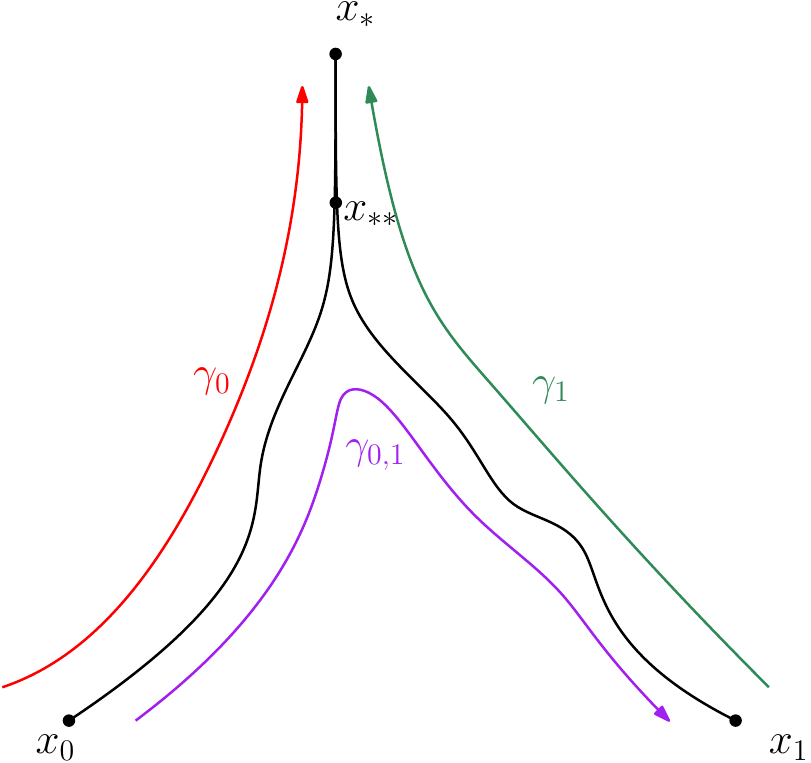}
        \caption{Illustration of the path $\gamma_{0,1}$. We need to determine whether it can be a geodesic.}
        \label{new path}
    \end{figure}

Let $\gamma_{0,1}$ be the path obtained by following $\gamma_0$ from $x_0$ to $x_{**}$, and then $\gamma_1$ from $x_{**}$ to $x_1$ (see Figure \ref{new path}). Since there is almost surely a unique geodesic between $x_0$ and $x_1$, the triple $(x_*,x_0,x_1)$ borders a $3^+$-hub if and only if $\gamma_{0,1}$ is a geodesic.

As mentioned previously, we need to study the geodesics $\gamma_0$ and $\gamma_1$ ``near $x_{**}$''. To do so, let $W_{\min}$ be the atom that contains $u_{**}$. The following proposition characterizes the law of $W_{\min}$.
\begin{proposition}\label{law}
   The law of $W_{\min}$ given $B$ is absolutely continuous with respect to $\int_0^1\N_{B_s}(dW)$.
\end{proposition}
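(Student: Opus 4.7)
The plan is to apply the Slivnyak--Mecke (Palm) formula to the two Poisson point measures $\mathcal{N}_l$ and $\mathcal{N}_r$, and to show that the conditional law of $W_{\min}$ given $B$ has a density bounded by a constant with respect to $\int_0^1 \N_{B_s}(dW)\, ds$. Absolute continuity then follows.

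First, I would rewrite the event ``$W_{\min} = \cdot$'' entirely in terms of $\mathcal{N}_l$ and $\mathcal{N}_r$. Set
\[
\psi_l = \inf_{w\in\mathcal{N}_l} W_*(w), \qquad \psi_r = \inf_{w\in\mathcal{N}_r} W_*(w).
\]
From \eqref{inf} the intensities $\nu_l = \nu_r = 2\int_0^1 \N_{B_t}(\cdot)\,dt$ charge $\{W_* < y\}$ only finitely for $y < \min_t B_t$, so a.s.\ each infimum is attained at a unique atom. Because the labels on the spine are given by $B$ (which is a.s.\ bounded below by $-\infty$ strictly above the subtree minima, since the subtree minima can be made arbitrarily small), the atom of $\mathcal{N}_l$ attaining $\psi_l$ is exactly the one containing $u_{i_*}$, and similarly on the right. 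The description of $u_*$ and $u_{**}$ in the text thus gives
\[
W_{\min} = \begin{cases} \operatorname{argmin}_{w\in\mathcal{N}_r} W_*(w), & \text{if } \psi_l \le \psi_r, \\ \operatorname{argmin}_{w\in\mathcal{N}_l} W_*(w), & \text{if } \psi_r < \psi_l. \end{cases}
\]

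Next, fix a non-negative measurable $F : \mathfrak{S} \to \R_+$. On the event $\{\psi_l \le \psi_r\}$, I would write
\[
\E[F(W_{\min})\1_{\psi_l\le\psi_r}\mid B] = \E\!\left[\sum_{w\in\mathcal{N}_r} F(w)\,\1_{W_*(w)\le W_*(w')\,\forall w'\in\mathcal{N}_r\setminus\{w\}}\,\1_{\psi_l\le W_*(w)}\,\bigg|\,B\right]
\]
and apply Slivnyak--Mecke to $\mathcal{N}_r$, conditionally on $B$ and on $\mathcal{N}_l$ (which is independent of $\mathcal{N}_r$ given $B$). The Palm version of $\mathcal{N}_r$ has the same law as $\mathcal{N}_r$, so the ``minimum'' event becomes $\{\psi_r \ge W_*(w)\}$. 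Averaging out $\mathcal{N}_l$ afterwards, this yields
\[
\E[F(W_{\min})\1_{\psi_l\le\psi_r}\mid B] = \int F(w)\,\P\!\left(\psi_r\ge W_*(w)\,\middle|\,B\right)\,\P\!\left(\psi_l\le W_*(w)\,\middle|\,B\right)\,\nu_r(dw).
\]
Both probability factors are bounded by $1$, hence the left-hand side is bounded by $\nu_r(F) = 2\int_0^1\!\!\int F(w)\,\N_{B_t}(dw)\,dt$. A symmetric bound holds on $\{\psi_r<\psi_l\}$, so
\[
\E[F(W_{\min})\mid B] \le 4\int_0^1\!\!\int F(w)\,\N_{B_t}(dw)\,dt,
\]
which is precisely the desired absolute continuity (with bounded density, in fact).

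The main obstacle will be essentially bookkeeping: matching the geometric description of $u_{**}$ as an infimum over the contour interval $[\rho_1,\rho_0]$ with the probabilistic description ``argmin-atom on the opposite side of $u_*$'', and justifying the Slivnyak--Mecke identity on a Poisson measure living on the product space $[0,1]\times\mathfrak{S}$. Both are routine once one uses the encoding of Section \ref{Triple} linking $\cT_1$ with the triple $(B,\mathcal{N}_l,\mathcal{N}_r)$, and neither introduces a genuine analytic difficulty.
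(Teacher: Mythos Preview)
Your proposal is correct and follows essentially the same approach as the paper: both apply the Palm (Slivnyak--Mecke) formula to the Poisson point measures $\mathcal{N}_l,\mathcal{N}_r$ to express $\E[F(W_{\min})\mid B]$ as an integral against the intensity $2\int_0^1\N_{B_s}(\cdot)\,ds$. The only cosmetic difference is that the paper computes the density explicitly via \eqref{inf} (obtaining exponential factors $\exp\bigl(-3\int_0^1(B_u-(W_i)_*)^{-2}\,du\bigr)$), whereas you simply bound the two probability factors by $1$, which already suffices for absolute continuity.
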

\begin{proof}
   Recall that $i_*$ and $j_*$ be the indices of the unique atoms of $\mathcal{N}_l$ and $\mathcal{N}_r$ that contain the minimum on each side. For every $w\in\mathfrak{W}_0$, let $\mathcal{M}(dt,d\omega)$ and $\mathcal{M'}(dt,d\omega)$ be two independent Poisson point measures on $\R_+\times C(\R_+,\mathfrak{W})$, defined on some probability space with probability measure $\Pi_w$, with intensity :
\begin{equation*}
 2\mathbf{1}_{[0,\zeta_{(w)}]}(t)dt\N_{w(t)}(d\omega).   
\end{equation*}
Using Palm's formula and \eqref{inf}, we obtain : 
    \begin{align*}      &\E\left[F(W_{i_*},W_{j_*})\,|\,B\right]=\E\left[\sum_{i\in I,j\in J}F(W_i,W_j)\1_{i=i_*,j=j_*}\,\bigg|\,B\right]\\
        &=\int_0^1 \int_0^1 dsds'\int_{\mathfrak{S}\times \mathfrak{S}}\N_{B_s}(dW_1)\N_{B_{s'}}(dW_2) F(W_1,W_2)\\
        &\quad\quad\quad\quad\times\Pi_B\bigg[\cM\Bigl((t_i,\omega_i):(\omega_i)_*<(W_1)_*\Bigl)=0,\,\cM'\left((t'_j,\omega'_j):(\omega'_j)_*<(W_2)_*\right)=0\bigg]\\
         &=\int_0^1\int_0^1 dsds'\int_{\mathfrak{S}\times \mathfrak{S}}\N_{B_s}(dW_1)\N_{B_{s'}}(dW_2) F(W_1,W_2)\exp\left(-3\int_0^1\left(\frac{1}{(B_u-(W_1)_*)^2}+\frac{1}{(B_u-(W_2)_*)^2}\right)du\right).
    \end{align*}
    This gives us the joint law of $(W_{i_*},W_{j_*})$. In particular, we have 
    \begin{align*}
        [\E[F(W_{\min})\,|\,B]=2\int_0^1\int_0^1 &dsds'\int_{\mathfrak{S}\times \mathfrak{S}}\N_{B_s}(dW_1)\N_{B_{s'}}(dW_2) F(W_1)\1_{(W_1)_*<(W_2)_*}\\
        &\times\exp\left(-3\int_0^1\left(\frac{1}{(B_u-(W_1)_*)^2}+\frac{1}{(B_u-(W_2)_*)^2}\right)du\right),
    \end{align*}
    
    which gives the result.
\end{proof}
\begin{figure}
        \centering
        \includegraphics[scale=0.35]{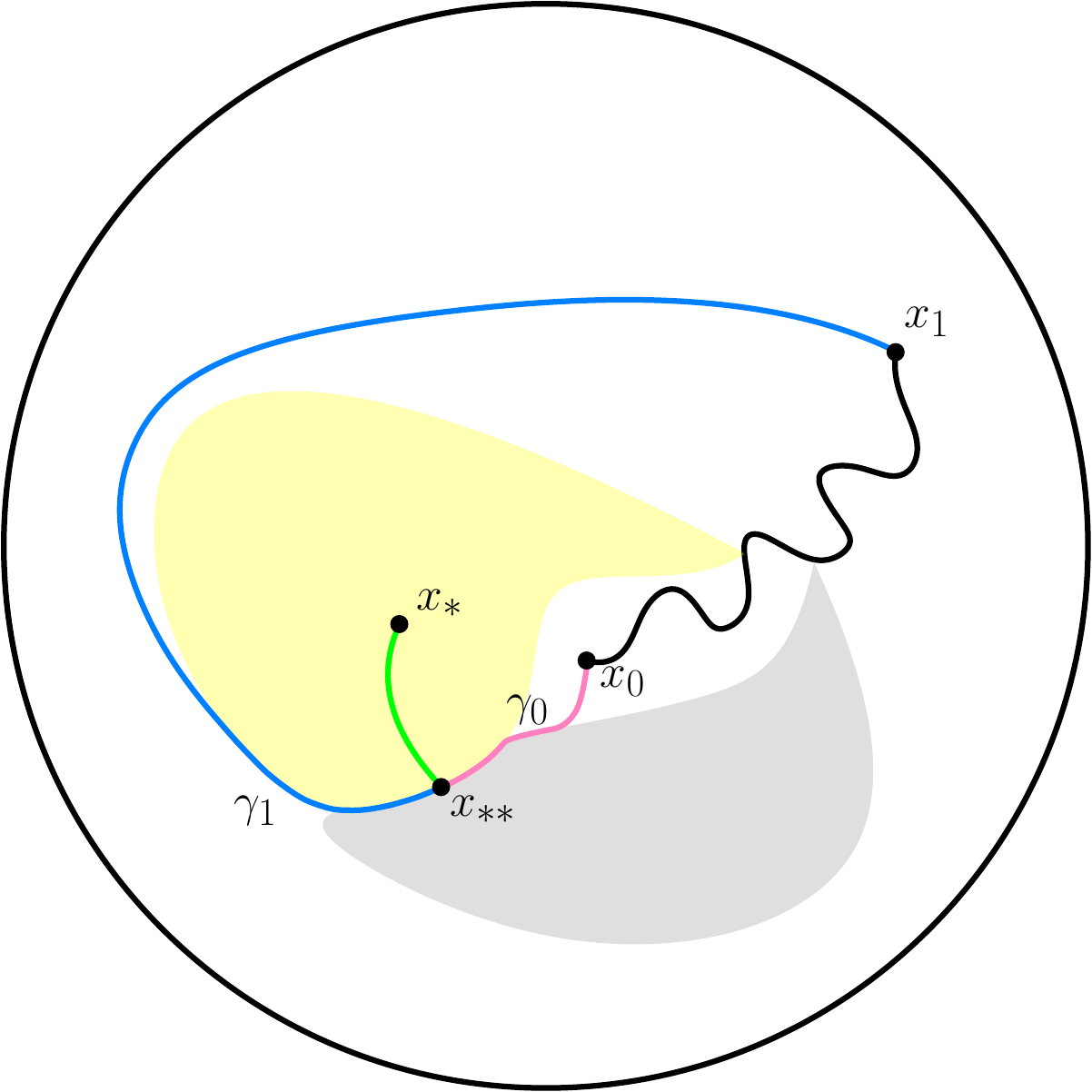}
        \caption{The same illustration as Figure \ref{Arbre}, but viewed in $\cS_1$. The black path corresponds to the projection of the spine of $\cT_1$, and the yellow (respectively gray) area is the projection of the atom containing $u_*$ (respectively $u_{**}$).}
\end{figure}
Let $\cT_{\min}$ be the labelled subtree associated to $W_{\min}$ The following lemma guarantees that $p_{\cS_1}(\cT_{\min})$ contains a non-trivial portion of $\gamma_0$ and $\gamma_1$. 
\begin{lemme}
Almost surely, 
\[\gamma_0\cap p_{\cS_1}(\cT_{\min})\neq\{x_{**}\}\quad\text{ and }\gamma_1\cap p_{\cS_1}(\cT_{\min})\neq\{x_{**}\}\]
\end{lemme}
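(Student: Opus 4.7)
The strategy is to show that for $s$ slightly below $-\ell_{**}$, both $\widehat{T}_0(s)$ and $T_1(s)$ lie inside the subtree $\cT_{\min}$ and are distinct from $u_{**}$. This will immediately give $\gamma_0(s),\gamma_1(s)\in p_{\cS_1}(\cT_{\min})\setminus\{x_{**}\}$, as required.

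By assumption $u_*\in[\rho_0,\rho_1]$, and we may further assume (by symmetry) that $u_{**}=u_{j_*}$ lies in the right half $[\rho_1,\rho_0]$ inside the atom $W_{\min}$ of $\cN_r$, so $u_{**}\in\cT_{\min}$. The first task is to prove that almost surely there exists $\delta>0$ such that every point $u\in[\rho_1,\rho_0]\setminus\cT_{\min}$ satisfies $\ell_u>\ell_{**}+\delta$. This relies on two inputs. First, since $\ell_*<\ell_{**}$ is attained in a left subtree, one has $\ell_{**}<\min_{t\in[0,1]}B_t$ almost surely, so the labels of the spine (which sit on $[\rho_1,\rho_0]$) are separated from $\ell_{**}$ by a positive gap. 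Second, among the atoms of $\cN_r$ other than $W_{\min}$, the smallest value of $W_*$ is almost surely strictly greater than $\ell_{**}$: this follows from the explicit joint density of the two lowest atomic minima computed via Palm's formula and \eqref{inf}, exactly as in the proof of Proposition \ref{law}. Using continuity of $\ell$ on $\cT_{\min}$ and almost sure uniqueness of $u_{**}$ as the minimizer of $\ell$ on $\cT_{\min}$, I would further shrink $\delta$ so that the set $\{u\in\cT_{\min}:\ell_u\leq\ell_{**}+\delta\}$ is a small connected neighborhood $V$ of $u_{**}$.

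Now fix $s\in(-\ell_{**}-\delta,-\ell_{**})$. The tree interval $[u_*,\rho_0]$ decomposes, in exploration order, as a first portion $[u_*,\rho_1]$ (inside the left half) followed by the entire right half $[\rho_1,\rho_0]$. Inside $[\rho_1,\rho_0]$, the only points with label $-s\in(\ell_{**},\ell_{**}+\delta)$ belong to $V\subset\cT_{\min}$, by the previous paragraph. Consequently $\widehat{T}_0(s)$, being the \emph{latest} visit in exploration order with label $-s$, must lie in the right portion, hence in $V$; and since $-s\neq\ell_{**}$ we have $\widehat{T}_0(s)\neq u_{**}$. Thus $\gamma_0(s)=p_{\cS_1}(\widehat{T}_0(s))$ lies in $p_{\cS_1}(\cT_{\min})\setminus\{x_{**}\}$. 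The same argument applied to $T_1(s)=\inf\{u\in[\rho_1,u_*]:\ell_u=-s\}$, for which $[\rho_1,\rho_0]$ is now the \emph{first} portion of $[\rho_1,u_*]$ in exploration order, yields $T_1(s)\in V\setminus\{u_{**}\}$ and hence $\gamma_1(s)\in p_{\cS_1}(\cT_{\min})\setminus\{x_{**}\}$.

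The main obstacle is the positive gap in the second input of the second paragraph, namely that the second-lowest atomic minimum among atoms of $\cN_r$ is almost surely strictly greater than the lowest. Once this is granted, the rest reduces to continuity of $\ell$ on a compact interval and a direct comparison of exploration times.
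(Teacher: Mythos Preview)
Your plan is essentially the paper's own argument: both produce $\delta>0$ with $(W_i)_*>\ell_{**}+\delta$ for every right atom $W_i\neq W_{\min}$, and then conclude that $\widehat T_0(-\ell_{**}-\varepsilon)$ and $T_1(-\ell_{**}-\varepsilon)$ lie in $\cT_{\min}$ for $0<\varepsilon<\delta$ (the paper obtains the gap simply by noting that for each $\varepsilon>0$ only finitely many atoms satisfy $(W_i)_*<\underline B-\varepsilon$, rather than via a Palm density computation). Two minor corrections: the deduction ``$\ell_*$ is attained in a left subtree, hence $\ell_{**}<\underline B$'' is not valid as written---the actual reason, implicit in the paper as well, is that a.s.\ some (in fact infinitely many) right atoms have minimum below $\underline B$; and the connected neighborhood $V$ is both unnecessary (membership in $\cT_{\min}$ is all you use) and not guaranteed to exist, since sublevel sets $\{u\in\cT_{\min}:\ell_u\le\ell_{**}+\delta\}$ of Brownian labels on a tree need not be connected even for small $\delta$.
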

\begin{proof}
    Without loss of generality, we can suppose that $x_{**}\in p_{\cS_1}(\cN_r)$. Set $\underline{B}=\inf_{s\in[0,1]}B_s$. Note that conditionally on $B$, for every $\varepsilon>0$, the number of atoms $W_i$ of $\cN_r$ such that $(W_i)_*<\underline{B}-\varepsilon$ follows a Poisson distribution with parameter 
    \[2\int_0^1\N_{B_s}(W_*<\underline{B}-\varepsilon)ds=3\int_0^1\frac{1}{(B_s-\underline{B}+\varepsilon)^2}ds<+\infty.\]
    In particular, this implies that there is no accumulation point in the set $\{(W_i,t_i),(W_i)_*<\underline{B}\}$. Since $W_{\min}$ belongs to this set, this means that almost surely, there exists $\delta>0$ such that for every atom $W_i\in\cN_r$ (which is different from $W_{\min}$), we have $(W_i)_*>(W_{\min})_*+\delta$. Therefore, for every $0<\varepsilon<\delta$, we have 
    \[\widehat{T}_0(-\ell_{**}-\varepsilon)\in \cT_{\min}\quad\text{ and }\quad  T_1(-\ell_{**}-\varepsilon)\in \cT_{\min},\] 
    which concludes the proof.
\end{proof}

Let $\tilde{\cS}$ be the random slice associated to $W_{\min}$.  In what follows, we will study the geodesics $\gamma_0$ and $\gamma_1$ ``restricted to $\tilde{\cS}$''. More precisely, if $0<a_0<b_0$ and $0<a_1<b_1$ are such that $\gamma_0|_{[b_0-a_0,b_0]}\subset p_\cS(\cT_{\min})$ and $\gamma_1|_{[b_1-a_1,b_1]}\subset p_\cS(\cT_{\min})$, with $p_{\cS_1}(\gamma_0(b_0))=p_{\cS_1}(\gamma_1(b_1))=x_{**}$, we set 
\[\tilde{\gamma}(t)=\left\{
    \begin{array}{ll}
   p_{\tilde{\cS}}(T_0(\ell_{**}+t-a_0)\quad&\text{ if $t\in[0,a_0]$} \\
     p_{\tilde{\cS}}(T_1(\ell_{**}-(t-a_0))\quad&\text{ if $t\in[a_0,a_0+a_1]$.}
    \end{array}\right. \]

Since $\tilde{\gamma}$ is a sub-path of $\gamma_{0,1}$, we already know that if $(x_*,x_0,x_1)$ borders a $3^+$-hub in $\cS_1$, then $\tilde{\gamma}$ is a geodesic in $\cS_1$. However, is not clear it is also a geodesic \textbf{in $\tilde{\cS}$}, which is the content of the following proposition.

\begin{proposition}\label{implication}
  Suppose that $(x_*,x_0,x_1)$ borders a $3^+$-hub in $\cS_1$. Then, $\tilde{\gamma}$ is a geodesic in $\tilde{\cS}$.  
\end{proposition}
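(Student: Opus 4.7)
The strategy is a sandwich: I will show $\tilde{D}(\tilde{\gamma}(0), \tilde{\gamma}(a_0+a_1)) = a_0 + a_1$, after which the geodesic property of $\tilde{\gamma}$ follows at once, since $\tilde{\gamma}$ is a continuous path of parametric length $a_0 + a_1$ realizing this distance between its endpoints.

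For the upper bound, I would verify that the two halves $t \mapsto p_{\tilde{\cS}}(T_0(\cdot))$ and $t \mapsto p_{\tilde{\cS}}(T_1(\cdot))$ forming $\tilde{\gamma}$ are themselves geodesics in $\tilde{\cS}$. This is the slice analogue of the argument already sketched for $\gamma_0, \gamma_1$ in $\cS_1$: along the branch of $\cT_{\min}$ used by $T_0$ from the $\rho_0$-side down to $u_{**}$, labels decrease monotonically between $T_0(s)$ and $T_0(s')$, so that $\tilde{D}^\circ(T_0(s), T_0(s')) \leq |s - s'|$; combined with the slice analogue of \eqref{Bound}, namely $\tilde{D}(u, v) \geq |\ell_u - \ell_v|$ (which in turn follows from $\tilde{d}(s, t) \geq |\widehat{W}_s - \widehat{W}_t|$), this forces equality. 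Since the two halves share the endpoint $p_{\tilde{\cS}}(u_{**})$, the triangle inequality yields $\tilde{D}(\tilde{\gamma}(0), \tilde{\gamma}(a_0 + a_1)) \leq a_0 + a_1$.

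For the lower bound, the key ingredient is the pointwise comparison $D(p_{\cS_1}(u), p_{\cS_1}(v)) \leq \tilde{D}(p_{\tilde{\cS}}(u), p_{\tilde{\cS}}(v))$ for $u, v \in \cT_{\min}$. At the level of $D^\circ$ and $\tilde{D}^\circ$, this reduces to comparing intervals: the slice pseudo-distance $\tilde{D}^\circ$ uses only the interval between $u$ and $v$ that stays inside $\cT_{\min}$, whereas $D^\circ$ takes the maximum of the two label-infima along the two intervals in $\cT_1$. One of these $\cT_1$-intervals coincides with the slice interval, while the other detours around the attachment point of $\cT_{\min}$ to the spine and visits labels that can only go strictly lower; hence $D^\circ \leq \tilde{D}^\circ$ on $\cT_{\min} \times \cT_{\min}$, and the inequality descends to $D$, $\tilde{D}$, and then to the quotient spaces. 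I would apply this with $u = T_0(\cdot)$ and $v = T_1(\cdot)$ at the endpoints of $\tilde{\gamma}$: the right-hand side is then $\tilde{D}(\tilde{\gamma}(0), \tilde{\gamma}(a_0 + a_1))$, while the left-hand side equals $D(\gamma_0(b_0 - a_0), \gamma_1(b_1 - a_1))$. This last quantity is exactly $a_0 + a_1$ because $\gamma_{0,1}$ is a geodesic in $\cS_1$ by hypothesis, and these are two points on it at arc-length positions differing by $a_0 + a_1$.

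The main obstacle is the comparison $D \leq \tilde{D}$ on $\cT_{\min}$. It is morally a consequence of the slice-gluing construction of $\cS_1$ recalled in Section \ref{slice}, but making it rigorous requires careful bookkeeping of how the interval structure on $\cT_1$ (via its snake encoding) restricts to the subtree $\cT_{\min}$, and identifying which of the two $\cT_1$-intervals corresponds to the single ``inside'' interval used when defining $\tilde{D}^\circ$ from $W_{\min}$. Once this comparison is established, the remainder of the proof is a direct assembly of the two bounds above.
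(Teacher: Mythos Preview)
Your argument is correct and follows the same sandwich strategy as the paper: the lower bound comes from $D\le\tilde D$ combined with the hub hypothesis, and the upper bound from $\tilde D\le\tilde D^\circ$ together with the explicit label computation along $T_0,T_1$. The only organizational difference is that the paper verifies $\tilde D(\tilde\gamma(s),\tilde\gamma(t))=t-s$ for \emph{all} pairs $s\in[0,a_0]$, $t\in[a_0,a_0+a_1]$ directly (which is no harder than doing it for the endpoints), and it treats the inequality $D\le\tilde D$ on $\cT_{\min}$ as already established rather than as an obstacle---your own sketch of why it holds is in fact the complete argument.
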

\begin{proof}
   First, note that $\tilde{\gamma}|_{[0,a_0]}$ (respectively $\tilde{\gamma}|_{[a_0,a_0+a_1]}$) is just a portion of the geodesic $\tilde{\gamma}^{(r)}$ (respectively $\tilde{\gamma}^{(l)}$). Therefore, we only need to compute the distance between $\tilde{\gamma}(s)$ and $\tilde{\gamma}(t)$ for every $s\in[0,a_0]$ and $t\in[a_0,a_0+a_1]$. Using the inequality $\tilde{D}\geq D$ and the fact that $\gamma_0$ and $\gamma_1$ border a $3^+$-hub, we have 
    \[\tilde{D}(\tilde{\gamma}(s),\tilde{\gamma}(t))\geq D(\gamma_0(s+(b_0-a_0)),\gamma_1(b_0-(t-a_0)))=t-s.\]
On the other hand, the bound $\tilde{D}\leq \tilde{D}^\circ$ gives 
\[\tilde{D}(\tilde{\gamma}(s),\tilde{\gamma}(t))\leq \tilde{D}^\circ(T_0(-\ell_{**}+s-a_0),T_1(-\ell_{**}-(t-a_0))=t-s,\]
which concludes the proof. 
\end{proof}

\subsection{Geodesics in the Brownian slice}

In this section, we study geodesics in the Brownian slice to prove Theorem \ref{hub}. By Proposition \ref{implication}, $x_{**}$ is a $3^+$-hub in the Brownian sphere if and only if $x_{**}$ is in the interior of a geodesic in the random slice associated to $W_{\min}$. Therefore, the rest of this paper is devoted to prove that this does not happen, almost surely. By scaling, it is enough to prove this result for a Brownian slice under $\N_1(\cdot\,|\,W_*=0)$, for which we have a construction based on a spine decomposition and well suited to our problem. We will prove the following result.

\begin{theorem}\label{passes through}
Consider a Brownian slice $\tilde{\cS}$ distributed under $\N_1(\cdot\,|\,W_*=0)$. Then, almost surely, there is no geodesic that passes through $x_*$.
\end{theorem}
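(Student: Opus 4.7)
The plan is to reduce, via a spine (Bismut-type) decomposition of the slice under $\N_1(\cdot\,|\,W_*=0)$, the existence of a geodesic through $x_*$ to an event controlled by explicit Poisson point measure computations, and then close with a $0$-$1$ argument.

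First I would observe that in the slice the bound \eqref{Bound} is sharp towards $x_*$: the path $s\mapsto\tilde d_{(\omega)}(s,s_*)$ gives $\tilde D^\circ(x,x_*)\leq\ell_x-\ell_*$ by \eqref{pseudo dist slice}, while the $\tilde D$-analogue of \eqref{Bound} provides the reverse inequality; hence $\tilde D(x,x_*)=\ell_x$ for every $x\in\tilde{\cS}$ (we have $\ell_*=0$ under the conditioning). Consequently, $x_*$ lies in the interior of a geodesic if and only if there exist $y,z\in\tilde{\cS}\setminus\{x_*\}$ with
\[
\tilde D(y,z)\;=\;\ell_y+\ell_z.
\]
Using the spine decomposition introduced in Section \ref{Triple}, I would realize the slice under $\N_1(\cdot\,|\,W_*=0)$ via a spine going from the root (label $1$) to $x_*$ (label $0$) carrying a suitably conditioned label process, together with two independent Poisson point measures of labelled subtrees (conditioned to keep the tip function nonnegative) attached to each side of the spine. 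A qualifying pair $(y,z)$ must then lie in two distinct atoms, one on each side of the spine.

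The next step is to rephrase the geodesic equality $\tilde D(y,z)=\ell_y+\ell_z$ as a \emph{no-shortcut} condition on the Poisson atoms: no detour through any other atom, or through the spine, beats the naive length $\ell_y+\ell_z$. Using Palm's formula in the style of the proof of Proposition \ref{law} and the explicit expression \eqref{inf} for the law of the minimum of a Brownian snake excursion, I would compute the joint intensity of pairs of atoms (one per side) together with the probability that any specific candidate pair produces a genuine geodesic. Analyzing this layer by layer in the minima of the atoms (say on shells $\{W_*\in[2^{-n-1},2^{-n}]\}$) should yield a strictly positive probability that no qualifying pair exists.

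Finally, to pass from positive probability to almost sure non-existence, I would use a $0$-$1$ argument: the event ``some geodesic passes through $x_*$'' is measurable with respect to the germ of $\tilde{\cS}$ near $x_*$, and the scaling relation \eqref{scaling} makes this germ self-similar, so iterated zoom-ins provide an ergodic family of rescalings; combined with the positive-probability step, this forces the event to have probability $0$. The hard part, I expect, will be the explicit probability estimate in the no-shortcut step: the Poisson atoms branching off the spine arbitrarily close to $x_*$ create many potential shortcuts as well as many candidate pairs, so a careful iterative control (possibly via a Borel--Cantelli argument on the shells of atoms) will be needed to rule out qualifying pairs uniformly.
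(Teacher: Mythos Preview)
Your overall architecture matches the paper's: spine decomposition of the slice under $\N_1(\cdot\,|\,W_*=0)$, a positive-probability step, then a $0$-$1$ law. The reduction to the condition $\tilde D(y,z)=\ell_y+\ell_z$ is exactly the paper's starting point. But the positive-probability step, as you describe it, has a genuine gap.

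You propose to treat candidate pairs $(y,z)$ atom-by-atom via Palm's formula and then run Borel--Cantelli over shells $\{W_*\in[2^{-n-1},2^{-n}]\}$. This runs into two problems. First, whether a specific pair satisfies $\tilde D(y,z)=\ell_y+\ell_z$ is not a property of the two atoms containing $y$ and $z$ alone: it depends on \emph{every} other atom (any of them can furnish a shortcut), so a Palm computation for the pair does not isolate the event. Second, the shell events are not independent, so Borel--Cantelli is not available. The paper sidesteps both issues with a single idea you are missing: it exhibits \emph{one} shortcut that defeats \emph{all} pairs simultaneously. Namely, on the event
\[
E_n=\Bigl\{\min_{w\in[\tau_1,\tau_{2^{-n}}]_\cT\cup[\tau_{2^{-n}},\tau_1]_\cT}\ell_w>2^{-n-1}\Bigr\},
\]
crossing the spine at the point of label $2^{-n}$ (rather than at $x_*$) gives $\tilde D(y,z)<\ell_y+\ell_z$ for every pair with $\tau_0\in[u,v]_\cT$, uniformly. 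One then needs $\P(\limsup E_n)>0$; since the $E_n$ are correlated, the paper uses the Kochen--Stone lemma, with the correlation structure $\P(E_n\cap E_m)=\P(E_m)\P(E_{n-m})$ coming from the Markov/scaling properties of the Bessel spine and explicit Poisson computations via \eqref{inf}. Your sketch does not contain either the uniform-shortcut idea or the Kochen--Stone step.

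Your $0$-$1$ argument is also different from the paper's. The paper does not invoke scaling or ergodicity of zoom-ins; it shows directly that the germ $\sigma$-algebra at $x_*$ is trivial, by realizing the spine as a time-reversed Bessel($9$) started at $0$ (so Blumenthal's $0$-$1$ law applies to the spine) together with independence properties of the Poisson atoms, and then checks that the event is germ-measurable because $p_{\tilde{\cS}}(\theta_n)$ contains a neighbourhood of $x_*$ for every $n$. Your scaling route might be made to work, but under the conditioning $W_*=0$ the measure is not literally scale-invariant, and you would still need the germ-measurability input that the paper proves as Lemma~\ref{voisinage}.
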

Note that Theorem \ref{passes through} implies Theorem \ref{hub}. Indeed, by scaling and Proposition \ref{law}, Theorem \ref{passes through} implies that no geodesic passes through $x_{**}$ in the slice associated to $W_{\min}$.

To prove this theorem, we will use a spine decomposition of the labelled tree $\cT$ under $\N_1(\cdot\,|\,W_*=0)$, that we recall here. This construction is a consequence of results in \cite{bessel}, and was already used in \cite{brownianplane,Hullprocess2016,hausdorff}.

Consider a triple $(R,\cN_l,\cN_r)$ defined on a probability space $\left(\Omega,\cF,\P\right)$, where:
\begin{itemize}[label=\textbullet]
    \item $R=(R_t)_{t\in[0,\tau_0]}$ is a Bessel process of dimension -5, starting at 1 and stopped when it reaches $0$,
    \item Given $R$, $\cN_l$ and $\cN_r$ are two independent Poisson point measure, with intensity 
    \[2\1_{[0,\tau_0]}\1_{\omega_*>0}\N_{R_t}(d\omega)dt.\]
\end{itemize}
Then, the random labelled tree $\cT$ associated to this triple, as explained in Section \ref{Triple}, is distributed as the random tree encoded by $\N_1(\cdot\,|\,W_*=0)$. Consequently, the random slice $\tilde{\cS}$ encoded by $(R,\cN_l,\cN_r)$ (as explained in Section \ref{slice}) is distributed as a Brownian slice under $\N_1(\cdot\,|\,W_*=0)$. 

For every $0\leq\beta\leq1$, set 
\[\tau_\beta=\inf\{t\geq0:R_t=\beta\}.\]
Note that the spine of $\cT$, which can be identified with the interval $[\tau_1,\tau_0]$, has a random length. The benefit of this construction is that $p_{\tilde{\cS}}(\tau_0)=x_*$. In particular, the point $u_*$ of the tree $\cT$ is exactly the top of the spine, which is identified with $\tau_0$. We also recall a particular case of Nagazawa's time reversal theorem (see \cite[Theorem VII 4.5]{revuzyor} and \cite[Exercise XI 1.23]{revuzyor}). Let $X=(X_t)_{t\in[0,S_1]}$ be a Bessel process of dimension $9$, starting from $0$ and stopped at its last hitting time of $1$, denoted by $S_1$. Then, the processes 
\begin{equation}\label{Time reversal}
    (R_{\tau_0-t})_{t\in[0,\tau_0]}\quad\text{and}\quad (X_t)_{t\in[0,S_1]}
\end{equation}
have the same law. 

We start by proving a weaker version of Theorem \ref{passes through}. 

\begin{proposition}\label{positive}
We have     
\[\P\left(\text{$x_*$ is not in the interior of a geodesic }\right)>0.\]
\end{proposition}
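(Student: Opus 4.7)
The plan rests on a clean reformulation. For every $y \in \tilde{\cS}$ one has $\tilde{D}(y, x_*) = \ell_y$: the lower bound $\tilde{D}(y, x_*) \geq \ell_y$ follows from $\tilde{D} \geq D$ combined with the analogue of \eqref{Bound}, using $\ell_* = 0$ under the conditioning; for the upper bound, picking any $s \in [0, \sigma]$ projecting to $y$, the straight interval $[s \wedge \tau_0, s \vee \tau_0]$ contains $\tau_0 = u_*$, and since $\tau_0$ is the unique minimizer of $\widehat{W}$ with value $0$, one finds $\tilde{d}(s, \tau_0) = \ell_y$ and hence $\tilde{D}(y, x_*) \leq \tilde{D}^\circ(y, x_*) \leq \ell_y$. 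By the triangle inequality, $x_*$ lies in the interior of some geodesic if and only if there exist $y, z \in \tilde{\cS} \setminus \{x_*\}$ with $\tilde{D}(y, z) = \ell_y + \ell_z$.

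With this reformulation in hand, the strategy is to exhibit a positive-probability event $\mathcal{E}$ on which no such pair exists. Informally, $\mathcal{E}$ encodes a configuration of the spine process $R$ and the Poisson atoms in $\cN_l, \cN_r$ near $\tau_0$ that generates a transverse identification in $\tilde{\cS}$ between the two sides of the spine at some positive label $\alpha$; such an identification provides, for every pair $(y, z)$ of points on opposite sides, a bypass path of length at most $\ell_y + \ell_z - 2\alpha < \ell_y + \ell_z$, strictly shortening $\tilde{D}(y, z)$ below the sum of labels. To construct $\mathcal{E}$ and estimate its probability, I would use the time-reversal identification \eqref{Time reversal} between $(R_{\tau_0 - t})$ and a Bessel($9$) process from $0$, together with the explicit Poisson intensity $2\mathbf{1}_{\omega_* > 0}\N_{R_t}(d\omega)dt$ and the formula \eqref{inf} for $\N_x(W_* < y)$, following the Palm-formula pattern used in the proof of Proposition \ref{law}.

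The main obstacle is the uniformity requirement: one needs $\tilde{D}(y, z) < \ell_y + \ell_z$ not only for pairs far from $x_*$, but for \emph{every} pair, including those with $\ell_y$ or $\ell_z$ arbitrarily small. A shortcut at a single fixed level $\alpha > 0$ does not accommodate points whose labels are already below $\alpha$, so $\mathcal{E}$ must contain transverse identifications at a sequence of scales accumulating at $x_*$. I would therefore exploit the scaling invariance of Bessel($9$) near $0$ together with the scaling relation \eqref{scaling} of $\N_x$ to transfer the positive-probability event from a fixed reference scale down to all small scales simultaneously, then take an intersection over a geometric sequence of scales.
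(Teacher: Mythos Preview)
Your high-level strategy is the paper's: reformulate as $\tilde D(y,z)<\ell_y+\ell_z$ for all $y,z\neq x_*$, manufacture bypasses at every small scale from the spine decomposition, and combine the scales. Two of the load-bearing steps are, however, either mis-specified or absent.

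The bypass mechanism. In the slice there is \emph{no} transverse identification at positive label between the two sides: any slice-contour interval joining a strictly left-subtree point to a strictly right-subtree point contains the apex time and hence has label minimum $0$, so $\tilde D^\circ(u,v)=\ell_u+\ell_v>0$ for such pairs. What the paper does instead is route through the \emph{spine point} $\tau_\beta$, which gives $\tilde D(u,v)\le \ell_u+\ell_v+2\beta-2m_l-2m_r$ with $m_l,m_r$ the label minima on $[u,\tau_\beta]_\cT$ and $[\tau_\beta,v]_\cT$; this is strictly below $\ell_u+\ell_v$ on the event $E_n=\bigl\{\min_{[\tau_1,\tau_{2^{-n}}]_\cT\cup[\tau_{2^{-n}},\tau_1]_\cT}\ell>2^{-n-1}\bigr\}$. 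Your claimed saving $2\alpha$ would require $m_l,m_r\ge\beta$, and that event has probability zero, since the relevant Poisson intensity $\N_{R_t}(0<W_*<\beta)\asymp(R_t-\beta)^{-2}$ is not integrable near $t=\tau_\beta$.

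The probabilistic step. Scaling of the Bessel$(9)$ process yields $\P(E_n)\downarrow p_\infty>0$, but a uniform lower bound on each $\P(E_n)$ says nothing about the intersection you propose; moreover only $\limsup E_n$ is needed, since any fixed pair $(y,z)$ is handled by some sufficiently large $n$. The paper obtains $\P(\limsup E_n)>0$ from the exact factorisation $\P(E_n\cap E_m)=\P(E_m)\,\P(E_{n-m})$ for $m<n$ (strong Markov for $R$ at $\tau_{2^{-m}}$ plus Bessel scaling), which gives $\P(E_n\cap E_m)\le p_\infty^{-1}\P(E_n)\P(E_m)$ and then applies the Kochen--Stone lemma. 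Your plan does not yet contain a correlation argument of this kind.
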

\begin{proof}
In what follows, we write $[\cdot\,,\,\cdot]_\cT$ to emphasize that we consider intervals on the tree $\cT$. For any $x,y\in\tilde{\cS}$ and $u,v\in\cT$ such that $p_{\tilde{\cS}}(u)=x$ and $p_{\tilde{\cS}}(v)=y$, we have
\[\tilde{D}(x,x_*)=\tilde{D}^\circ(u,u_*)=\ell_x,\quad \tilde{D}(y,x_*)=\tilde{D}^\circ(v,u_*)=\ell_y.\]
Therefore, if a geodesic between $x$ and $y$ passed through $x_*$, we would have 
\[\Tilde{D}(x,y)=\tilde{D}^\circ(u,v)=\ell_u+\ell_v.\]
Recall that $\tau_1$ identified with the bottom of the spine. First, note that if $u,v\in]\tau_1,\tau_0]_\cT$, we have
\[\tilde{D}^\circ(u,v)=\ell_u+\ell_v-2\max\left(\min_{w\in[u,v]_\cT}\ell_w,\min_{w\in[v,u]_\cT}\ell_w\right)>\ell_u+\ell_v.\]
This inequality still holds if $u,v\in]\tau_0,\tau_1]_\cT$.
Therefore, excluding these two cases, we just need to prove the result for $\tau_0\in[u,v]_\cT$. Then, observe that for topological reasons, the geodesic $\gamma$ between $u$ and $v$ must cross the curve $p_{\tilde{\cS}}((\tau_t)_{0\leq t\leq1})$ (which is the projection of the spine). We will prove that $x_*$ is never the best point to cross this curve.

Fix $0<\beta<1$, and suppose that $u\in[\tau_1,\tau_\beta]_\cT$ and $v\in[\tau_\beta,\tau_1]_\cT$. Then, 
\begin{equation}\label{majoration}
    \tilde{D}(u,v)\leq \tilde{D}^\circ(u,\tau_\beta)+\tilde{D}^\circ(\tau_\beta,v)=\ell_u+\beta-2\min_{w\in[u,\tau_\beta]_\cT}\ell_w+\ell_v+\beta-2\min_{w\in[\tau_\beta,v]_\cT}\ell_w.
\end{equation}

In particular, if there exists $0<\beta<1$ such that 
\begin{equation}\label{Minimum control}
2\min_{w\in[\tau_1,\tau_\beta]_\cT}\ell_w\wedge2\min_{w\in[\tau_\beta,\tau_1]_\cT}\ell_w>\beta
\end{equation}
then by \eqref{majoration}, 
\[\tilde{D}^\circ(u,v)<\ell_u+\ell_v\]
which would imply that the geodesic does not pass through $x_*$. Therefore, we need to show that the inequality \eqref{Minimum control} holds for some $\beta$ arbitrary small, almost surely. To this end, for every $n\in\N$, we introduce the event 
\[E_n=\left\{\min_{w\in[\tau_1,\tau_{2^{-n}}]_\cT\cup[\tau_{2^{-n}},\tau_1]_\cT}\ell_w>2^{-n-1}\right\}.\] The previous discussion implies that
\begin{equation}\label{inclusion}
    \{\limsup E_n\}\subset\{\text{No geodesic passes through }x_*\}.
\end{equation}
Using properties of Poisson point measures, formula \eqref{inf} and the time reversal property \eqref{Time reversal}, we have 
\begin{align*}
    \P(E_n)&=\E\left[\exp\left(-4\int_{\tau_1}^{\tau_{2^{-n}}}\N_{R_t}(0<W_*<2^{-n-1})dt\right)\right]\\
    &=\E\left[\exp\left(-6\int^{\tau_{2^{-n}}}_{\tau_1}\frac{1}{(R_t-2^{-n-1})^2}-\frac{1}{(R_t)^2}dt\right)\right]\\
     &=\E\left[\exp\left(-6\int_{S_{2^{-n}}}^{S_1}\frac{1}{(X_t-2^{-n-1})^2}-\frac{1}{(X_t)^2}dt\right)\right]\\
    &=\E\left[\exp\left(-6\int_{S_1}^{S_{2^n}}\frac{1}{(X_t-1/2)^2}-\frac{1}{(X_t)^2}dt\right)\right]
\end{align*} where $S_t$ stands for the last hitting time of $t$ by $X$
(the last equality follows from the scaling properties of Bessel processes).
Note that the integral is finite for every $n\geq1$, due to the well-known fact that for every $\varepsilon\in(0,1/6)$ and $t$ large enough, $X_t>t^{1/2-\varepsilon}$ a.s. Moreover, for every $n\leq m$, $\P(E_n)\geq\P(E_m)$. In this situation, we cannot use the Borel-Cantelli lemma to conclude because the events $(E_n)_{n\geq1}$ are not independent. However, for $n>m$, we have
\[E_n\cap E_m=\left\{\inf_{w\in[\tau_1,\tau_{2^{-m}}]_\cT\cup[\tau_{2^{-m}},\tau_1]_\cT}\ell_w>2^{-m-1}\right\}\cap\left\{\inf_{w\in[\tau_{2^{-m}},\tau_{2^{-n}}]_\cT\cup[\tau_{2^{-n}},\tau_{2^{-m}}]_\cT}\ell_w>2^{-n-1}\right\}.\] 
Using the fact that the processes $(R_t)_{\tau_1\leq t\leq \tau_{2^{-m}}}$ and $(R_t)_{\tau_{2^{-m}}\leq t\leq \tau_{2^{-n}}}$ are independent, we obtain 
\begin{align*}
    \P(E_n\cap E_m)&=\E\left[\exp\left(-6\int_{\tau_1}^{\tau_{2^{-m}}}\frac{1}{(R_t-2^{-m-1})^2}-\frac{1}{(R_t)^2}dt\right)\exp\left(-6\int_{\tau_{2^{-m}}}^{\tau_{2^{-n}}}\frac{1}{(R_t-2^{-n-1})^2}-\frac{1}{(R_t)^2}dt\right)\right]\\
    &=\E\left[\exp\left(-6\int_{\tau_1}^{\tau_{2^{-m}}}\frac{1}{(R_t-2^{-m-1})^2}-\frac{1}{(R_t)^2}dt\right)\right]\E\left[\exp\left(-6\int_{\tau_{2^{-m}}}^{\tau_{2^{-n}}}\frac{1}{(R_t-2^{-n-1})^2}-\frac{1}{(R_t)^2}dt\right)\right]\\
    &=\P(E_m)\E\left[\exp\left(-6\int_{S_1}^{S_{2^{n-m}}}\frac{1}{(X_t-1/2)^2}-\frac{1}{(X_t)^2}dt\right)\right]\\   
    &=\P(E_m)\P(E_{n-m})
\end{align*}
which we rewrite as $C_{n,m}\P(E_n)\P(E_m)$, where
\[C_{n,m}=\frac{\P(E_{n-m})}{\P(E_n)}. \]
Define 
\begin{equation*}   p_\infty=\lim_{k\rightarrow\infty}\P(E_k)=\E\left[\exp\left(-6\int_{S_1}^{\infty}\frac{1}{(X_t-1/2)^2}-\frac{1}{(X_t)^2}dt\right)\right].
\end{equation*}
Since the integral is finite, we have $p_\infty>0$, and
\[C_{n,m}=\frac{\P(E_{n-m})}{\P(E_n)}\leq\frac{1}{p_\infty}<\infty.\]
Therefore, by the Kochen-Stone lemma (see \cite{KochenStone} and \cite{KochenStone2}),
\[\P(\limsup E_n)>0.\]
This gives the result, using \eqref{inclusion}.
\end{proof}

To conclude the proof of Theorem \ref{passes through}, we rely on a $0-1$ law argument. By \eqref{Time reversal}, the random slice $\tilde{\cS}$ can be constructed from the triple $(X,\widehat{\cN}_l,\widehat{\cN}_r)$, where
\begin{itemize}[label=\textbullet]
    \item $X=(X_t)_{t\in[0,S_1]}$ is a Bessel process of dimension 9, starting at 0 and stopped when it reaches $1$ for the last time,
    \item Given $X$, $\widehat{\cN}_l$ and $\widehat{\cN}_r$ are two independent Poisson point measure, with intensity 
    \[2\1_{[0,S_1]}\1_{\omega_*>0}\N_{X_t}(d\omega)dt.\]
\end{itemize}
For the rest of this paper, we will work with this construction. Let $I$ and $J$ be sets indexing the atoms of the Poisson point measures $\widehat{\cN}_l$ and $\widehat{\cN}_r$, so that
\[ \widehat{\cN}_l=\sum_{i\in I}\delta_{(t_i,\cT_i)} \quad\text{and}\quad \widehat{\cN}_r=\sum_{j\in J}\delta_{(t_j,\cT_j)}. \]
Let $T_1=\inf\{t\geq 0,X_t=1\}.$
Then, set $\cF_n=\sigma((X_t)_{0\leq t\leq 1/n\wedge T_1},(\cT_i)_{0\leq t_i\leq 1/n\wedge T_1},(\cT_j)_{0\leq t_j\leq 1/n\wedge T_1})$. 

\begin{lemme}\label{0-1}
    The $\sigma$-algebra 
    \[\bigcap_{n\geq1}\cF_n\]
    is trivial.
\end{lemme}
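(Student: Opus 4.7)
This is a Blumenthal-type $0$--$1$ law for the triple $(X,\widehat{\cN}_l,\widehat{\cN}_r)$, which starts from a deterministic state.

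\emph{Step 1 (removing the $T_1$ cutoff).} Since $X$ is continuous and $X_0=0$, $T_1>0$ almost surely, so for $\P$-a.e.\ $\omega$ we have $1/n\wedge T_1=1/n$ for every $n$ large enough. Thus, modulo $\P$-null sets,
\[\bigcap_{n\geq 1}\cF_n=\bigcap_{n\geq 1}\cG_{1/n},\qquad \cG_t:=\sigma\bigl((X_s)_{s\leq t},\widehat{\cN}_l|_{[0,t]\times\mathfrak{S}},\widehat{\cN}_r|_{[0,t]\times\mathfrak{S}}\bigr).\]

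\emph{Step 2 (Markov property of the triple).} For each $t>0$, I would verify that the post-$t$ triple
\[Y_t:=\bigl((X_{t+s})_{s\geq 0},\widehat{\cN}_l|_{[t,\infty)\times\mathfrak{S}},\widehat{\cN}_r|_{[t,\infty)\times\mathfrak{S}}\bigr)\]
is conditionally independent of $\cG_t$ given $X_t$, with conditional law depending only on the value $X_t$. This combines the strong Markov property of the Bessel-$9$ process $X$ with the fact that, conditionally on the whole trajectory of $X$, the restrictions of $\widehat{\cN}_l$ and $\widehat{\cN}_r$ to $[0,t]\times\mathfrak{S}$ and to $[t,\infty)\times\mathfrak{S}$ are independent.

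\emph{Step 3 (Blumenthal argument).} Fix $A\in\bigcap_n\cG_{1/n}$. For any bounded measurable functional $\Phi$ of $Y_t$ with $t>0$ fixed, Step 2 yields, for every $n$ with $1/n<t$,
\[\E[\1_A\,\Phi(Y_t)]=\E\bigl[\1_A\,\psi_n(X_{1/n})\bigr],\qquad \psi_n(x):=\E[\Phi(Y_t)\mid X_{1/n}=x].\]
Letting $n\to\infty$, we have $X_{1/n}\to 0$ almost surely, and a continuity argument for $\psi_n$ at $x=0$ gives $\E[\1_A\,\Phi(Y_t)]=\P(A)\,\E[\Phi(Y_t)]$. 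Since the $\sigma$-algebra generated by the full triple equals $\bigvee_{t>0}\sigma(Y_t)$, applying this identity with $\Phi(Y_t)=\E[\1_A\mid\sigma(Y_t)]$ and then letting $t\downarrow 0$ (via the martingale convergence $\E[\1_A\mid\sigma(Y_t)]\to\1_A$) yields $\P(A)=\P(A)^2$, hence $\P(A)\in\{0,1\}$.

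The main obstacle will be the continuity of $\psi_n$ at $x=0$: because $\N_x(\omega_*>0)=\infty$ for every $x>0$, one cannot simply invoke weak continuity of the transition kernels. I would handle this by first restricting attention to functionals $\Phi$ that depend only on atoms $(t_i,\cT_i)$ with $(W_i)_*\geq\eta$ for some fixed $\eta>0$, whose Poisson intensity is locally finite and depends continuously on the starting value of $X$ (via formula \eqref{inf}), and then removing the restriction by a monotone class argument as $\eta\downarrow 0$.
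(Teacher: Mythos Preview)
Your plan is sound and gives a correct proof, but by a route different from the paper's. One remark on Step~3: the obstacle you anticipate is not the right one. Once Step~2 has reduced the conditioning to the one-dimensional state $X_{1/n}$, the infinite Poisson intensity is irrelevant; conditioning further on $X_t$ and setting $g(y):=\E[\Phi(Y_t)\mid X_t=y]$ (bounded) gives $\psi_n(X_{1/n})=P_{t-1/n}g(X_{1/n})$ for the Bessel-$9$ semigroup $P_s$, and the convergence you need is simply the Feller property of that semigroup---no $\eta$-truncation of atoms is required. (A minor technicality: since the construction lives on $[0,S_1]$, it is cleanest to first extend $X$ to an unstopped Bessel-$9$ on $[0,\infty)$, with Poisson measures on all of $\R_+$; this only enlarges the germ $\sigma$-algebra and makes the Markov property available at every fixed $t>0$ without worrying about $\{t<T_1\}$.)

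The paper instead avoids any semigroup argument by a decoupling trick: it realizes $(\widehat{\cN}_l,\widehat{\cN}_r)$ as a measurable function of $X$ together with two Poisson measures $\cN_l^*,\cN_r^*$ of intensity $2\,\N_0(d\omega)\,dt$ that are \emph{independent} of $X$ (shift each atom's labels by $X_{t_i}$, then thin). A general lemma on intersections of $\sigma$-algebras under independence (\cite[Exercise~II.2.15]{revuzyor}) then shows that the germ $\sigma$-algebra of $(X,\cN_l^*,\cN_r^*)$ equals the $\sigma$-algebra generated by the three separate germ $\sigma$-algebras, each trivial (Blumenthal for $X$, restriction-independence for $\cN_l^*,\cN_r^*$). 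Your approach is the direct Blumenthal argument via Feller continuity; the paper's trades that analytic step for a purely measure-theoretic decomposition.
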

\begin{proof}
    Let $\cN_l^*$ and $\cN_r^*$ be two independent Poisson point measures on $\R_+\times\cS$, with intensities 
     \[2\N_{0}(d\omega)dt,\]
     and let $I^*$ and $J^*$ be sets indexing the atoms of these measures.
     As $(X,\widehat{\cN}_l,\widehat{\cN}_r)$ can be expressed as a measurable function of $(X,\cN^*_l,\cN_r^*)$, it is enough to prove the result with $\cF_n^*$ instead of $\cF_n$, where 
     \[\cF_n^*=\sigma((X_t)_{0\leq t\leq 1/n\wedge T_1},(\cT_i)_{i\in I^*,0\leq t_i\leq 1/n},(\cT_j)_{j\in J^*,0\leq t_j\leq 1/n}).\]
     We will use the following result (see \cite[Exercise II.2.15]{revuzyor}):
     consider some $\sigma$-algebra $\mathcal{H}$ and $\mathcal{G}_0\subseteq\mathcal{G}_1\subseteq...$ such that $\mathcal{H}$ and $\mathcal{G}_0$ are independent. Then, 
     \begin{equation}\label{Intersection tribu}
     \sigma(\mathcal{H},(\mathcal{G}_i)_{i\in\N})=\bigcap_{i\in\N}\sigma(\mathcal{H},\cG_i).
     \end{equation}
     Set 
     \begin{align*}
         \cH_n&=\sigma((X_t)_{0\leq t\leq 1/n\wedge T_1}),\\
         \cG_n&=\sigma((\cT_i)_{i\in I^*,\,0\leq t_i<1/n}),\\
         \cG'_n&=\sigma((\cT_j)_{j\in J^*,\,0\leq t_i<1/n}), 
     \end{align*} and define 
     \[\cH=\bigcap_{n\geq1}\cH_n,\quad\cG=\bigcap_{n\geq1}\cG_n,\quad\cG'=\bigcap_{n\geq1}\cG'_n.\]
     By construction, for every $n,m,k\geq1$, $\cH_n,\cG_m$ and $\cG'_k$ are independent. First, we can apply \eqref{Intersection tribu} twice with $\cG$ and $(\cG'_n)_{n\geq1}$, which gives 
     \[\sigma(\cG,\cG')=\bigcap_{n\geq1}\sigma(\cG,\cG'_n)=\bigcap_{n\geq1}\bigcap_{m\geq1}\sigma(\cG_m,\cG'_n).\]
     Then, we can apply \eqref{Intersection tribu} a couple more times to $\sigma(\cG,\cG')$ and $(\cH_n)_{n\geq1}$, and we obtain 
     \[\sigma(\cH,\cG,\cG')=\bigcap_{n\geq1}\sigma(\cH_n,\sigma(\cG,\cG'))=\bigcap_{n,m,h\geq1}\sigma(\cH_n,\cG_m,\cG'_h).\]
     However, since our sequences of $\sigma$-algebras are decreasing, one can easily check that 
     \[\bigcap_{n,m,h\geq1}\sigma(\cH_n,\cG_m,\cG'_h)=\bigcap_{n\geq1}\sigma(\cH_n,\cG_n,\cG'_n),\]
     which gives 
     \[\sigma(\cH,\cG,\cG')=\bigcap_{n\geq1}\cF_n^*.\]
     
However, since a Bessel process of dimension $9$ is the norm of a Brownian motion in dimension $9$, $\cH$ is trivial by Blumenthal $0-1$ law. Similarly, independence properties of Poisson point measures imply that $\cG$ and $\cG'$ are also trivial. Therefore, $\sigma(\cH,\cG,\cG')$ is also trivial, which completes the proof.   
\end{proof}
We claim that for every $n\geq1$, the event 
\begin{equation}
    \{\text{There exists a geodesic that passes through }x_*\}
\end{equation}
belongs to $\cF_n$. To see this, set 
\[\theta_n=[0,X_{1/n\wedge T_1}]\cup\bigcup_{i\in I\cup J,0\leq t_i<1/n\wedge T_1}\cT_i.\]
This set is a random subtree of $\cT$, and is measurable with respect to $\cF_n$. We will need the following lemma.
\begin{lemme}\label{voisinage}
    Almost surely, for every $n\geq1$, $p_{\tilde{\cS}}(\theta_n)$ contains a neighborhood of $x_*$ in $\tilde{\cS}$.
\end{lemme}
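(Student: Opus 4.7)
The plan is to use the identity $\tilde{D}(y,x_*)=\ell_y$, which was already observed in the proof of Proposition \ref{positive} and holds for every $y\in\tilde{\cS}$ thanks to the bound $\tilde{D}\geq|\ell_\cdot - \ell_\cdot|$ combined with $\tilde{D}\leq\tilde{D}^\circ$. If I can show that the label function has a strictly positive infimum on $\cT\setminus\theta_n$, then the entire open ball $B_{\tilde{D}}(x_*,M_n)$ with $M_n:=\inf\{\ell_u:u\in\cT\setminus\theta_n\}$ lies in $p_{\tilde{\cS}}(\theta_n)$: indeed, if $\tilde{D}(y,x_*)<M_n$ and $u\in\cT$ projects to $y$, then $\ell_u=\tilde{D}(y,x_*)<M_n$, forcing $u\in\theta_n$. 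Since the claim is for every $n\geq1$ and countable unions of null events are null, it is enough to fix $n$ and prove $M_n>0$ almost surely.

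Setting $t_n=1/n\wedge T_1$, the complement $\cT\setminus\theta_n$ splits into (a) the continuation of the spine at parameters in $(t_n,S_1]$ and (b) the subtrees $\cT_i$ with attachment time $t_i\geq t_n$. For (a), the labels along the spine are exactly $X_t$; since $X$ is a Bessel process of dimension $9$ (hence the norm of a $9$-dimensional Brownian motion, which never returns to $0$), $X_t>0$ for all $t>0$, and by continuity $m:=\min_{t\in[t_n,S_1]}X_t>0$ almost surely. For (b), the labels in the subtree $\cT_i$ are bounded below by $(\omega_i)_*$, so the task reduces to showing $\inf_{i:t_i\geq t_n}(\omega_i)_*>0$.

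This last point is where the main work lies and it will be handled via the Poisson structure. Conditionally on $X$, the atoms $(t_i,\omega_i)$ with $t_i\in[t_n,S_1]$ form a Poisson point measure with intensity $2\mathbf{1}_{\omega_*>0}\N_{X_t}(d\omega)\,dt$, so the number of atoms having $(\omega_i)_*<\varepsilon$ is Poisson with mean
\[
2\int_{t_n}^{S_1}\N_{X_t}(0<W_*<\varepsilon)\,dt
=\int_{t_n}^{S_1}\!\left(\frac{3}{(X_t-\varepsilon)^2}-\frac{3}{X_t^2}\right)dt
\]
by formula \eqref{inf}. For $\varepsilon<m/2$, the integrand is dominated by $C\varepsilon/X_t^3\leq C\varepsilon/m^3$, so the whole expression is at most a constant times $\varepsilon$ and tends to $0$ as $\varepsilon\downarrow 0$. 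Consequently the probability of having no atom with $(\omega_i)_*<\varepsilon$ tends to $1$, and a.s.\ there is a random $\varepsilon_0>0$ below which no such atom exists. Combining, $M_n\geq\min(m,\varepsilon_0)>0$ almost surely.

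The one subtlety I want to flag is the finiteness of $\int_{t_n}^{S_1}X_t^{-3}\,dt$: this is exactly why it is crucial to work with $t_n>0$ rather than $0$, because near $0$ the Bessel process behaves like $t^{1/2}$ and the integral would diverge. Cutting off at $t_n$ removes the singularity and makes the Poisson estimate available. All other ingredients (the $\tilde{D}$--label identity, the distributional formula \eqref{inf}, the transience of Bessel-$9$) are already stated earlier in the paper and can be invoked directly.
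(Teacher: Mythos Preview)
Your argument is correct and in fact yields a slightly stronger, quantitative statement: the open ball $B_{\tilde D}(x_*,M_n)$ with $M_n=\inf\{\ell_u:u\in\cT\setminus\theta_n\}$ is contained in $p_{\tilde\cS}(\theta_n)$. The paper takes a different, more compactness-flavoured route: it argues by contradiction, extracts a convergent subsequence $u_n\to u\in\mathrm{Cl}(\cT\setminus\theta_{n_0})$ with $\tilde D(p_{\tilde\cS}(u),x_*)=0$, and then invokes the identification property (the slice analogue of Proposition~\ref{Identification}) to conclude $u=u_*$, which is impossible since $u_*$ lies at tree-distance at least $t_{n_0}>0$ from $\cT\setminus\theta_{n_0}$.

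The two approaches trade off as follows. The paper's proof is shorter and only needs that the minimum label is attained at a \emph{single} point $u_*$ lying in the interior of $\theta_n$; it does not require any quantitative control on the labels away from $u_*$. Your approach avoids appealing to the identification lemma for slices (which the paper cites somewhat informally) and instead reuses directly the Poisson/Bessel machinery from \eqref{inf} and the construction of $\tilde\cS$; it also makes explicit the radius of the neighbourhood. Both are perfectly valid; your proof is more self-contained given what has actually been established in the paper, while the paper's is more conceptual once one accepts the identification result for slices.
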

\begin{proof}
    We argue by contradiction. If the statement did not hold, we could find $n_0\in\N$ and a sequence $(u_n)_{n\in\N}\in\cT$ such that for every $n\in\N$, 
    \[D(x_*,p_{\tilde{\cS}}(u_n))\leq1/n\quad\text{ and }\quad u_n\notin\theta_{n_0}.\]
    By compactness, we can suppose that the sequence $(u_n)$ converges toward some element $u\in Cl(\cT\backslash\theta_{n_0})$ such that $D(x_*,p_{\tilde{\cS}}(u))=0$. However, by Lemma \ref{Identification}, this is not possible, which concludes. 
\end{proof}

We can finally prove the main result of this section. 
\begin{proof}[Proof of Theorem \ref{passes through}.]
    By Lemma \ref{voisinage}, the event 
    \begin{equation}\label{event}
    \{\text{There exists a geodesic that passes through }x_*\}
\end{equation} belongs to $\bigcap_{n\geq1}\cF_n$. 
However, by Lemma \ref{0-1} this $\sigma$-algebra is trivial. Moreover, by Proposition \ref{positive}, the probability of the event \eqref{event} is strictly less than $1$. Therefore, it has probability $0$, which concludes the proof. 
\end{proof}

\section{Proof of Proposition \ref{alignement}}

In this section, we use Theorem \ref{hub} to prove Proposition \ref{alignement}.
 \begin{proof}[Proof of Proposition \ref{alignement}]
  We argue by contradiction, by proving that if the statement did not hold, there would be a $3^+$-hub in the Brownian sphere with positive probability. 
First, we know that almost surely, $x_*$ is not in the interior of a geodesic (see \cite[Corollary 7.7]{geodesic1}). Therefore, if $(x_*,p_{\cS}(u),p_{\cS}(v))$ are aligned, then either $p_{\cS}(u)$ is on a simple geodesic from $p_{\cS}(v)$, or $p_{\cS}(v)$ is on a simple geodesic from $p_{\cS}(u)$.

       Consider $u,v\in\cT$ such that $D^\circ(u,v)=D(u,v)$, and suppose that $(x_*,p_{\cS}(u),p_{\cS}(v))$ are not aligned. Without loss of generality, suppose that $u_*\notin[u,v]$, and as previously, define 
       \[u_{**}=\inf\left\{w\in[u,v]\,:\ell_w=\inf_{z\in[u,v]}\ell_z\right \}.\]
      Note that $u_{**}$ is different from $u$ and $v$, otherwise $(x_*,p_{\cS}(u),p_{\cS}(v))$ would be aligned. Set $\ell_{**}=\ell_{u_{**}}$. Define, for $t\in[0,D(u,v)]$,
      \[U(t)=\left\{
    \begin{array}{ll}
    \inf\left\{w\in[u,u_{**}]\,:\ell_w=\ell_u-t\right\} \quad&\text{ if $t\in[0,\ell_u-\ell_{**}]$}, \\
      \sup\left\{w\in[u_{**},v]\,:\ell_w=2\ell_{**}-\ell_u+t\right\} \quad&\text{ if $t\in[\ell_u-\ell_{**},\ell_u+\ell_v-2\ell_{**}]$.}
    \end{array}\right.\]
    and
    \[\gamma_{u,v}(t)=p_\cS(U(t)).\]
This path corresponds to following a simple geodesic from $p_\cS(u)$ up to $p_\cS(u_{**})$, and then another simple geodesic from $p_\cS(v)$, in reverse direction (this is very similar to the path $\gamma_{0,1}$, see Figure \ref{new path}). 
Let us show that $\gamma_{u,v}$ is a geodesic between $p_{\cS}(u)$ and $p_{\cS}(v)$. First, note that the restriction of $\gamma_{u,v}$ to $[0,\ell_u-\ell_{**}]$ (respectively $[\ell_u-\ell_{**},\ell_u+\ell_v-2\ell_{**}]$) is a portion of a simple geodesic. Therefore, we just need to show that for every $t\in[0,\ell_u-\ell_{**}],\,s\in[\ell_u-\ell_{**},\ell_u+\ell_v-2\ell_{**}]$, 
\begin{equation}\label{égalité}
    D(\gamma_{u,v}(t),\gamma_{u,v}(s))=s-t.
\end{equation}
By the triangle inequality, we have 
\begin{align*}
D^\circ(u,v)=D(u,v)&\leq D(p_{\cS}(u),\gamma_{u,v}(t))+D(\gamma_{u,v}(t),\gamma_{u,v}(s))+D(p_{\cS}(v),\gamma_{u,v}(s))\\
&=D^\circ(u,U(t))+D(\gamma_{u,v}(t),\gamma_{u,v}(s))+D^\circ(U(s),v).
\end{align*}
Since 
\[D^\circ(u,v)-D^\circ(u,U(t))-D^\circ(U(s),v)=\ell_{U(t)}+\ell_{U(s)}-2\ell_{**}=D^\circ(U(t),U(s)),\]
we have
\[D^\circ(U(t),U(s))\leq D(\gamma_{u,v}(t),\gamma_{u,v}(s)).\]
Since the converse inequality always holds, the previous line is an equality. Finally, note that 
\[D^\circ(U(t),U(s))=\ell_{U(t)}+\ell_{U(s)}-2\ell_{**}=\ell_u-t+2\ell_{**}-\ell_u+s-2\ell_{**}=s-t,\]
which gives \eqref{égalité}. However, this implies that $u_{**}$ is a $3^+$-hub, which is in contradiction with Theorem \ref{hub}. Therefore, $(x_*,p_{\cS}(u),p_{\cS}(v))$ are aligned, which concludes the proof. 
\end{proof}

\printbibliography
\end{document}